\pgfplotsset{compat=1.16}
\numberwithin{equation}{section}
\providecommand{\examplename}{Example}
\newtheorem{theorem}{Theorem}[section]
\newtheorem{proposition}[theorem]{Proposition}
\newtheorem{corollary}[theorem]{Corollary}
\newtheorem{lemma}[theorem]{Lemma}
\theoremstyle{remark}
\newenvironment{remark}
    {\pushQED{\qed}\remarkx}
    {\popQED\endremarkx}
\theoremstyle{definition}
\newtheorem*{example*}{\protect\examplename}
\theoremstyle{plain}
\newtheorem*{assumption*}{Assumption}
\newcommand{\rarrow}{\Rightarrow}
\renewcommand{\phi}{\varphi}
\renewcommand{\epsilon}{\varepsilon}
\newcommand{\ind}{\mathds{1}}
\newcommand{\lp}{\left(}
\newcommand{\rp}{\right)}
\newcommand{\R}{\mathbb{R}}
\newcommand{\N}{\mathbb{N}}
\newcommand{\PP}{\mathbb{P}}
\newcommand{\E}{\mathbb{E}}
\renewcommand*{\H}{\mathcal{H}}
\newcommand{\F}{\mathcal{F}}
\newcommand{\dd}{\mathrm{d}}
\newcommand{\W}{\mathsf{W}}
\newcommand{\X}{\mathsf{X}}
\newcommand{\Y}{\mathsf{Y}}
\newcommand{\U}{\mathsf{U}}
\newcommand{\V}{\mathsf{V}}
\newcommand{\ZZ}{\mathsf{Z}}
\newcommand{\NN}{\mathsf{N}}
\newcommand{\limn}{\lim_{n\to \infty}}
\renewcommand{\L}{\mathsf{L}}
\newcommand{\Hp}{\H^{\otimes p}}
\newcommand{\Ho}{\H^{\odot p}}
\newcommand\cov{\mathrm{Cov}}
\newcommand\dW{\mathrm{W}}
\newcommand\TV{\mathrm{TV}}
\newcommand{\contr}[1]{\widetilde{\otimes}_{#1}}
\newcommand{\inprod}{\langle \cdot, \cdot \rangle}
\newcommand{\dom}{\text{Dom}}
\newcommand{\remove}{\!\!\!\! \!\!\!\! \!\!\!\! \!\!\!\!}
\appto{\bibsetup}{\sloppy}
\title{Fourth-Moment Theorems for Sums of Multiple Integrals}
\author{Andreas Basse-O'Connor$^*$, David Kramer-Bang$^{\dag}$ \& Clement Svendsen$^\ddag$}
\address{$^{*\dag}$Department of Mathematics, Aarhus University, DK}
\address{$^{\ddag}$Department of Computer Science, Aarhus University, DK}
\email{$^*$basse@math.au.dk}
\email{$^\dag$bang@math.au.dk}
\email{$^\ddag$clementks@cs.au.dk}
\begin{document}

\title{Fourth-moment theorems for sums of multiple integrals}

\subjclass[2020]{Primary 60F05, 60H07, 33C45}
\keywords{Central Limit Theorems, Fourth-moment Theorems, Malliavin Calculus, Wiener Chaos}
\date{XX/08/2025}

\commby{}

\begin{abstract}
In this paper, we extend the celebrated fourth-moment theorem of Nualart and Peccati [\emph{Ann. Probab.}~33(1), 2005], which states that convergence of the fourth moment to $3$ implies weak convergence to the standard Gaussian for random variables with zero mean and unit variance in a fixed Wiener chaos. Our main result establishes that the same conclusion holds for random variables expressible as sums of two components in the $p$-th and $q$-th Wiener chaoses, provided $p$ and $q$ have different parities. We also prove quantitative bounds in the $1$-Wasserstein and total variation distances, both scaling with the square root of the fourth cumulant. To the best of our knowledge, this is the first fourth-moment theorem that applies to nontrivial sums across different chaos levels. Finally, we show via a counterexample with $p = 1$, $q = 3$ that the result fails when $p$ and $q$ share the same parity.

\end{abstract}

\maketitle

\section{Introduction and results}\label{sec: intro}
Throughout statistics and probability theory, weak limit theorems play a fundamental role in understanding the asymptotic behavior of complex distributions and stochastic processes. Often, the limiting distribution is a Gaussian distribution, such as in the celebrated classical Central Limit Theorem (CLT). A common technique for proving general weak limit theorems is the \textit{method of moments}, which states the following in the setting of centered Gaussian variables: Let $\NN \sim \mathcal{N}(0,\sigma^2)$ for some $\sigma \in \R$ and $(\X_n)_{n\in \N}$ be a sequence of centered random variables that have moments of all orders. If $\E[\X_n^k] \to \E[\NN^k] \text{ as } n \to \infty$ for all $k\in \N$, then the sequence $(\X_n)_{n\in \N}$ converges in distribution to $\NN$~\cite[~Thm 30.2]{Billingsley}. Although the method of moments can be useful, it is often difficult to compute all moments. In some cases, however, convergence of just the fourth moments suffices. That is, the following implication holds 
\begin{equation}
    \E[\X_n^4] \to 3\sigma^4 \text{ as } n\to \infty \quad \text{ implies } \quad  \X_n \xrightarrow[]{\mathcal{D}} \mathcal{N}(0,\sigma^2) \label{eq: fourth moment condition} \text{ as } n\to \infty.
\end{equation}
We recall that $\E[\NN^4]=3\sigma^4$ for $\NN \sim \mathcal{N}(0,\sigma^2)$.
Theorems guaranteeing~\eqref{eq: fourth moment condition} are known as \textit{fourth-moment theorems}, and they provide a significant improvement of the classical method of moments. It is often convenient to work with cumulants rather than moments, so we denote by $\kappa_4(\X) \coloneqq \E[\X^4]-3\E[\X^2]^2$ the fourth cumulant of a centered random variable $\X$ with finite fourth moment. We can then equivalently state \eqref{eq: fourth moment condition} in terms of the fourth cumulant
\begin{equation}
    \kappa_4(\X_n) \to 0 \text{ as } n\to \infty \quad \text{ implies } \quad  \X_n \xrightarrow[]{\mathcal{D}} \mathcal{N}(0,\sigma^2) \label{eq: fourth cumulant condition} \text{ as } n\to \infty.
\end{equation} 
The first fourth-moment theorem, proved by Nualart \& Pecatti~\cite[Thm~1]{Nualart}, states that~\eqref{eq: fourth moment condition} holds for random variables with only a single term in their \textit{chaos expansions}. A chaos expansion of a square-integrable random variable $\X$, which is measurable with respect to an underlying Gaussian process, is an orthogonal decomposition 
\begin{equation}\label{eq: decomposition}
    \X = \E[\X] + \sum_{p=1}^\infty \mathsf{F}_p,
\end{equation}
where each $\mathsf{F}_p$ is an element of the $p$'th \textit{Wiener chaos} $\mathbb{H}_p$ which is spanned by the $p$'th Hermite polynomial $H_p$ (defined in~\eqref{defn_hermite_poly} below). Equivalently, we can represent $\mathbb{H}_p$ using a Hilbert space isomorphism $I_p:\H^{\odot p} \to \mathbb{H}_p$, where $\H^{\odot p}$ is the space of symmetric $p$-tensors of a suitable Hilbert space $\H$. See Section~\ref{sec: prelims} for more details.

Following the influential result of Nualart and Peccati~\cite[Thm~1]{Nualart}, which established the fourth moment theorem, there has been considerable interest in its extensions. These include settings such as Poisson chaos~\cite{Poisson}, Rademacher chaos~\cite{Rademacher}, free probability~\cite{Free}, and infinitely divisible distributions~\cite{ID}, general homogeneous sums~\cite{MR3500415}, as well as multivariate generalizations~\cite[Thm~6.2.2]{MR2962301} \&~\cite{MR2126978,MR3158721,MR3533710}. Moreover, for any $k \ge 2$, convergence of the $2k$th moment to the $2k$th moment of the Gaussian is also equivalent to weak convergence~\cite{MR3474463}. For an extended survey of results on extensions of the fourth-moment theorem, see~\cite{MR3178608} and the online resource with a broad collection of results related to the Malliavin--Stein method and fourth moment theorems, curated by Ivan Nourdin\footnote{\url{https://sites.google.com/site/malliavinstein/home}}.

As a simple example that fourth-moment theorems do not apply in general, consider a Bernoulli random variable $\Y$ with parameter $p=1/2+1/\sqrt{12}$. It can then be shown that $(\Y-p)/\sqrt{p(1-p)}$ has a mean of $0$, unit variance, and a fourth moment of $3$. However, $\Y$ is evidently not Gaussian. The distribution of $\Y$ can be realized in the Wiener chaos setting as the distribution of $\ind_{(-\infty, t)}(\X)$ for an appropriate $t\in \R$ and $\X \sim \mathcal{N}(0,1)$, and hence $\Y$ has an (infinite) expansion of the form \eqref{eq: decomposition}. While this example shows that~\cite[Thm~1]{Nualart} does not generalize to variables with arbitrary chaos expansions, a natural question to ask is when fourth-moment theorems hold for variables with two or more terms in their chaos expansions. 

Our main result in Theorem~\ref{thm: p,q} shows that a fourth-moment theorem holds if there are exactly two terms in the chaos expansion with orders $p,q \in \N$ of different parities, i.e.\ $p\ne q \mod 2$. To the best of our knowledge, Theorem~\ref{thm: p,q} is the first fourth-moment theorem which applies to random variables not in a fixed Wiener chaos. Furthermore, we obtain a rate of convergence in the $1$-Wasserstein and total variation distances, denoted $d_{\W}$ and $d_{\TV}$ respectively, proportional to the square root of the fourth cumulant, similar to the original result of~\cite{MR2962301}.

\begin{theorem}\label{thm: p,q}
    Let $(\X_n)_{n\in \N}$ be a sequence of random variables with chaos decompositions $\X_n = I_p(u_n)+I_q(v_n)$ for all $n \in \N$,
    where $(u_n)_{n\in \N}$ and $(v_n)_{n\in \N}$ are sequences from $\Ho$ and $\H^{\odot q}$ respectively, and $p,q \in \N$. 
    
    \noindent \textbf{(i)} Assume that $p,q\in \N$ are of different parities. Then a fourth-moment theorem holds. In particular, 
    if $\E[\X_n^2]=\sigma^2 >0$ for all $n \in \N$, then the following are equivalent as $n \to \infty$:
    \begin{itemize}
        \item[{(a)}] $\X_n \xrightarrow[]{\mathcal{D}} \mathcal{N}(0,\sigma^2)$,
        \item[{(b)}] $\emph{Var}[\langle D\X_n, DL^{-1}\X_n\rangle_\H]\to 0$,
        \item[{(c)}] $\|u_n \otimes_{r_1} u_n\|_{\H^{\otimes (2p-2r_1)}}, \|v_n \otimes_{r_2} v_n\|_{\H^{\otimes (2q-2r_2)}} \to 0$ as $n\to \infty$ for all $r_1\in \{1,\dots, p-1\}$ and $r_2 \in  \{1,\dots,q-1\}$,
        \item[{(d)}] $\kappa_4(\X_n) \to 0$.
    \end{itemize}

    Furthermore, for $\NN \sim \mathcal{N}(0,\sigma^2)$ and $d\in \{ d_\dW, d_\TV\}$, there exists a constant $C_{p,q,\sigma}>0$ depending on $p,q$ and $\sigma$ such that 
    \begin{align}
            d(\X_n,\NN) &\le \frac{2}{\sigma \wedge \sigma^2} \emph{Var}[\langle D\X_n, DL^{-1}\X_n\rangle_\H]^{1/2 } \label{eq: ineq1}  \\
    & \le C_{p,q,\sigma}\bigg( \max_{r_1 \in \{1,\dots,p-1\}}\|u_n \otimes_{r_1} u_n\|_{\H^{\otimes (2p-2r_1)}} \label{eq: ineq2} \\
    &\quad + \max_{r_2 \in \{1,\dots,q-1\}}\|v_n \otimes_{r_2} v_n\|_{\H^{\otimes (2q-2r_2)}}  \bigg)^{1/2} \notag \\
    & \le C_{p,q,\sigma}\sqrt{\kappa_4(\X_n) 
    }, \quad \text{ for all }n \in \N. \label{eq: ineq3}
    \end{align}

    \noindent \textbf{(ii)} If $p,q$ are of the same parity, then the fourth-moment theorem does not hold in general. Indeed, there exists $\X=\Y +\mathsf{Z}$ with $\Y \in \mathbb{H}_3$ and $\mathsf{Z} \in \mathbb{H}_1$, such that $\kappa_4(\X)=
    0$ but $\X$ is not Gaussian. 
\end{theorem}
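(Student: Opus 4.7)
The plan is to establish the three inequalities \eqref{eq: ineq1}--\eqref{eq: ineq3} in sequence; together with hypercontractivity on the finite chaos sum $\mathbb{H}_p \oplus \mathbb{H}_q$ (which upgrades weak convergence to convergence of all moments and supplies $(a) \Rightarrow (d)$), these produce the full equivalence of $(a)$, $(b)$, $(c)$, $(d)$. Inequality \eqref{eq: ineq1} is the standard Malliavin--Stein bound for the Gaussian target: for centered $F$ with $\E[F^2] = \sigma^2$ one has $d(F, \NN) \leq \tfrac{2}{\sigma \wedge \sigma^2}\, \E\bigl|\sigma^2 - \langle DF, DL^{-1} F\rangle_{\H}\bigr|$, and Jensen's inequality converts the $L^1$ norm into the square root of a variance.

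For \eqref{eq: ineq2} I would compute the Malliavin derivatives $D\X_n = p\, I_{p-1}(u_n) + q\, I_{q-1}(v_n)$ and $DL^{-1}\X_n = I_{p-1}(u_n) + I_{q-1}(v_n)$ (up to sign), so that the Stein inner product splits into three pieces $p\|I_{p-1}(u_n)\|_\H^2$, $q\|I_{q-1}(v_n)\|_\H^2$, and $(p+q)\langle I_{p-1}(u_n), I_{q-1}(v_n)\rangle_\H$. Applying the product formula and chaos orthogonality, the variance decomposes into non-negative combinations of $\|u_n \otimes_{r_1} u_n\|^2$, $\|v_n \otimes_{r_2} v_n\|^2$, and cross contraction norms $\|u_n \otimes_r v_n\|^2$. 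The Cauchy--Schwarz identity $\|f \otimes_r g\|^2 = \langle f \otimes_{p-r} f, g \otimes_{q-r} g\rangle$ followed by AM--GM then absorbs the cross contractions into the diagonals, producing \eqref{eq: ineq2}.

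Inequality \eqref{eq: ineq3} is the crux, and here the parity hypothesis is decisive. By the product formula, $I_p(u_n)^3$ lives entirely in chaoses of parity $p$, while $I_q(v_n)$ belongs to chaos $q$ of opposite parity; chaos orthogonality hence yields $\E[I_p(u_n)^3 I_q(v_n)] = 0 = \E[I_p(u_n) I_q(v_n)^3]$. Expanding $\E[\X_n^4]$ via the binomial theorem and invoking these vanishing cross-moments produces the clean identity
\[
\kappa_4(\X_n) = \kappa_4(I_p(u_n)) + \kappa_4(I_q(v_n)) + 6\, \cov(I_p(u_n)^2, I_q(v_n)^2).
\]
The pivotal claim is that the covariance is non-negative. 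Via the product formula it equals $\sum_{k=1}^{\min(p,q)} c_{p,q,k}\,\langle u_n \contr{p-k} u_n, v_n \contr{q-k} v_n\rangle$ with positive constants $c_{p,q,k}$, and each inner product is $\geq 0$ because the unsymmetrized diagonal contractions $u_n \otimes_{p-k} u_n$ and $v_n \otimes_{q-k} v_n$ are positive semidefinite kernels of the form $MM^\top$ for the matricizations $M$ of $u_n$ and $v_n$; a permutation-orbit argument then transfers this positivity to the symmetrized pairing by expressing it as a non-negative combination of squared cross-contraction norms $\|u_n \otimes_\ell v_n\|^2$. Combining with the classical single-chaos Nualart--Peccati bound $\|u \otimes_{r_1} u\|^2 \leq C_{p, r_1}\, \kappa_4(I_p(u))$ then yields \eqref{eq: ineq3}. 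The hard part is making this positivity claim fully rigorous: while the PSD structure for unsymmetrized contractions is immediate from the matricization, one must verify that the orbits of $S_{2k}$ acting on $v_n \otimes_{q-k} v_n$ all pair with $u_n \otimes_{p-k} u_n$ to give squares of cross contractions.

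For part (ii), I would construct $\X = a\xi + (\xi^3 - 3\xi) \in \mathbb{H}_1 \oplus \mathbb{H}_3$ for a standard Gaussian $\xi$ and a judicious $a \in \R$; here $\mathsf{Z} = a\xi \in \mathbb{H}_1$ and $\Y = \xi^3 - 3\xi = H_3(\xi) \in \mathbb{H}_3$. A direct moment calculation yields
\[
\kappa_4(\X) = 24\bigl((a-3)^3 + 18(a-3)^2 + 135(a-3) + 405\bigr),
\]
which equals $3240$ at $a = 0$ and $-24$ at $a = -4$; by the intermediate value theorem there exists $a^* \in (-4, 0)$ with $\kappa_4(\X) = 0$, and for any such $a^*$ the variable $\X$ is a non-degenerate cubic polynomial in $\xi$ and therefore not Gaussian. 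The mechanism is precisely the breakdown of the parity argument above: for $p=1, q=3$ of the same parity, the cross-moments $\E[I_p^3 I_q]$ and $\E[I_p I_q^3]$ no longer vanish and can contribute negative terms cancelling against the non-negative pieces of $\kappa_4(\X)$.
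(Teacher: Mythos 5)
Part (i) of your proposal follows essentially the same route as the paper: the Malliavin--Stein bound for \eqref{eq: ineq1}, the splitting of $\langle D\X_n, DL^{-1}\X_n\rangle_\H$ into two diagonal terms and one mixed term with the mixed term controlled via $\|u\otimes_r v\|^2=\langle u\otimes_{p-r}u,\,v\otimes_{q-r}v\rangle$ and Cauchy--Schwarz, the parity-driven identity $\kappa_4(\X_n)=\kappa_4(I_p(u_n))+\kappa_4(I_q(v_n))+6\cov(I_p(u_n)^2,I_q(v_n)^2)$ obtained from the vanishing of the odd cross-moments (this is exactly the paper's Lemma~\ref{lem: key lemma}), and $(a)\Rightarrow(d)$ by hypercontractivity. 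The one genuine gap is the step you yourself flag: the non-negativity of $\cov(I_p(u_n)^2,I_q(v_n)^2)$. The positive-semidefiniteness of the matricized, unsymmetrized contractions does not by itself control the pairing $\langle u_n\contr{p-k}u_n,\,v_n\contr{q-k}v_n\rangle$ that the product formula actually produces, and the permutation-orbit bookkeeping you allude to is precisely the nontrivial combinatorial content. The paper does not reprove this; it invokes the identity of \"Ust\"unel and Zakai~\cite{MR1048936}, which writes $\cov(I_p(u)^2,I_q(v)^2)$ as a non-negative combination of the squared cross-contraction norms $\|u\otimes_r v\|^2$. Without that input (or a completed orbit argument) the inequality $\kappa_4(I_p(u_n))+\kappa_4(I_q(v_n))\le\kappa_4(\X_n)$, and hence \eqref{eq: ineq3} and the implication $(d)\Rightarrow(c)$, is not established.

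Part (ii) is correct and takes a genuinely different, and in fact simpler, route than the paper. You use a single standard Gaussian $\xi$ and $\X=aH_1(\xi)+H_3(\xi)$, for which $\kappa_4(\X)=24(a^3+9a^2+54a+135)$ (equivalently your expression in $a-3$); the values $3240$ at $a=0$ and $-24$ at $a=-4$ check out and are consistent with the paper's cumulant polynomial, so the intermediate value theorem gives a root $a^*\in(-4,0)$. The paper instead fixes the coefficient at $10$, introduces a correlated pair $(\U,\V)$ with correlation $\rho$, solves a cubic in $\rho$ explicitly, and verifies non-Gaussianity by computing $\E[\X^6]\ne 15\E[\X^2]^3$; your construction is the degenerate case $\rho=1$ with the coefficient as the free parameter, and it avoids the sixth-moment computation entirely. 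The only thing to add is a one-line justification of the assertion that a genuine cubic polynomial of a Gaussian is not Gaussian, e.g.\ because $\E[e^{-\X}]=\infty$ (or because its tails decay like $e^{-ct^{2/3}}$), whereas a Gaussian has all exponential moments finite.
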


\begin{remark}\label{rem: p,q}
    (I) In Theorem~\ref{thm: p,q}, we can replace the assumption  $\E[\X_n^2]=\sigma^2 $ with just  $\E[\X_n^2]=\sigma^2_n \to \sigma^2 $ as $n\to \infty$ and instead get the bound
    \[
    d(\X_n, \NN) \le C_{p,q,\sigma}\sqrt{\kappa_4(\X_n)
} + d(\NN,\NN_n), \quad \text{ for all }n \in \N,
    \]
    where $d\in \{ d_\dW, d_\TV\}$, $\NN \sim \mathcal{N}(0,\sigma^2)$ and $\NN_n \sim \mathcal{N}(0,\sigma_n^2)$. 
    In both cases, we can estimate the distance between the two Gaussians~\cite[Prop.~3.6.1]{MR2962301} for all $n \in \N$:
    \begin{equation*}
        d_{\text{TV}}(\NN, \NN_n)  \le \frac{2}{\sigma_n^2 \vee \sigma^2}|\sigma_n^2-\sigma^2|, \quad \text{ and } \quad 
        d_{\W}(\NN, \NN_n)  \le \frac{\sqrt{2/\pi}}{\sigma_n\vee \sigma}|\sigma_n^2-\sigma^2|.
    \end{equation*}

(II)  In (ii) of Theorem \ref{thm: p,q}, the variable $\X$ is chosen as a linear combination of Hermite polynomials applied to the components of a bivariate Gaussian vector. Specifically, we find a suitable covariance matrix $\Sigma$ and $a\in \R$ such that for $(\mathsf{U},\mathsf{V})^T \sim \mathcal{N}_2(\mathbf{0}, \Sigma),$ the variable $\X=aH_1(\mathsf{U})+H_3(\mathsf{V})$ satisfies the stated property. 
\end{remark}

To obtain a complete characterization of normality of random variables $\X = \Y + \mathsf{Z}$ where $\Y, \mathsf{Z}$ are in Wiener chaoses with different order parities, we prove that random variables with chaos decompositions of the type considered in Theorem~\ref{thm: p,q} cannot themselves be Gaussian. Theorem~\ref{thm: ikke gaussisk1} therefore generalizes~\cite[Cor.~5.2.11]{MR2962301}, which asserts that a random variable in a fixed Wiener chaos of order $p \ge 2$ has strictly positive fourth cumulant and hence cannot be Gaussian. 

\begin{theorem}\label{thm: ikke gaussisk1}
    Let $\X$ be a non-degenerate random variable with chaos decomposition $\X=\Y+\mathsf{Z}$, where $\Y \in \mathbb{H}_p$ and $\mathsf{Z}\in \mathbb{H}_q$ for $p,q \in \N$ with different parities. Then $\kappa_4(\X)>0$, and in particular $\X$ cannot be Gaussian.  
\end{theorem}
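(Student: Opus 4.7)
\textit{Plan.} My strategy is to expand $\kappa_4(\X)$ by multilinearity of the joint cumulant and exploit the parity hypothesis to cancel the odd mixed terms. Writing
\[
\kappa_4(\X) = \kappa_4(\Y) + 4\kappa(\Y,\Y,\Y,\mathsf{Z}) + 6\kappa(\Y,\Y,\mathsf{Z},\mathsf{Z}) + 4\kappa(\Y,\mathsf{Z},\mathsf{Z},\mathsf{Z}) + \kappa_4(\mathsf{Z}),
\]
the first step is to observe that $\Y^3$ has a chaos expansion supported only on levels of the same parity as $p$, while $\mathsf{Z} \in \mathbb{H}_q$ with $q \not\equiv p \pmod 2$. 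Orthogonality of distinct Wiener chaoses then forces $\E[\Y^3\mathsf{Z}] = \E[\Y\mathsf{Z}^3] = \E[\Y\mathsf{Z}] = 0$, so $\kappa(\Y,\Y,\Y,\mathsf{Z}) = \kappa(\Y,\mathsf{Z},\mathsf{Z},\mathsf{Z}) = 0$. Combined with $\kappa(\Y,\Y,\mathsf{Z},\mathsf{Z}) = \cov(\Y^2,\mathsf{Z}^2)$, this reduces the problem to the identity
\[
\kappa_4(\X) = \kappa_4(\Y) + \kappa_4(\mathsf{Z}) + 6\cov(\Y^2,\mathsf{Z}^2).
\]

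The main obstacle I anticipate is showing $\cov(\Y^2,\mathsf{Z}^2) \ge 0$, since a priori this covariance has no obvious sign. My plan is to rewrite $\E[\Y^2\mathsf{Z}^2] = \E[(\Y\mathsf{Z})^2]$ and apply the product formula
\[
\Y\mathsf{Z} = \sum_{r=0}^{p\wedge q} r!\binom{p}{r}\binom{q}{r}\, I_{p+q-2r}(u \contr{r} v),
\]
so that orthogonality of distinct chaoses yields
\[
\E[(\Y\mathsf{Z})^2] = \sum_{r=0}^{p\wedge q} (r!)^2\binom{p}{r}^2\binom{q}{r}^2 (p+q-2r)!\,\|u \contr{r} v\|^2.
\]
Since all summands are nonnegative, retaining only the $r=0$ term gives $\E[\Y^2\mathsf{Z}^2] \ge (p+q)!\,\|u \contr{0} v\|^2$. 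I would then apply the combinatorial identity
\[
(p+q)!\,\|u \contr{0} v\|^2 = p!\,q! \sum_{s=0}^{p\wedge q}\binom{p}{s}\binom{q}{s}\,\|u \otimes_s v\|^2,
\]
which can be derived by averaging $\langle u \otimes v, \sigma\cdot (u\otimes v)\rangle$ over $\sigma \in S_{p+q}$ and grouping permutations by the number $s$ of $u$-indices they send into $v$-positions. Retaining only its $s=0$ summand yields $(p+q)!\,\|u \contr{0} v\|^2 \ge p!\,q!\,\|u\|^2\|v\|^2 = \E[\Y^2]\E[\mathsf{Z}^2]$, and chaining the inequalities gives $\cov(\Y^2,\mathsf{Z}^2) \ge 0$.

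Finally, to upgrade to strict positivity I invoke \cite[Cor.~5.2.11]{MR2962301}, which asserts that $\kappa_4(F) > 0$ for any nonzero element $F$ of a Wiener chaos $\mathbb{H}_k$ with $k \ge 2$. Since $p$ and $q$ have different parities they cannot both equal $1$, so at least one of them --- say $q$ --- is $\ge 2$. The implicit assumption that both $\Y$ and $\mathsf{Z}$ appear as genuine (nonzero) components of the chaos expansion of $\X$ then forces $\kappa_4(\mathsf{Z}) > 0$, and combining this with the nonnegativity of $\kappa_4(\Y)$ and of $\cov(\Y^2,\mathsf{Z}^2)$ yields $\kappa_4(\X) > 0$. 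In particular $\X$ cannot be Gaussian, since any Gaussian random variable has vanishing fourth cumulant.
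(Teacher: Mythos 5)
Your proof is correct and follows essentially the same route as the paper's: your identity $\kappa_4(\X)=\kappa_4(\Y)+\kappa_4(\ZZ)+6\cov(\Y^2,\ZZ^2)$ is exactly the paper's Lemma~\ref{lem: key lemma} (derived there by expanding $\E[\X^4]$ binomially and killing the odd mixed moments via the product formula and the parity hypothesis, which is your cumulant-multilinearity argument in different clothing), and strict positivity is then obtained, as you do, from \cite[Cor.~5.2.11]{MR2962301} applied to the component of order at least $2$. The one place you genuinely diverge is the nonnegativity of $\cov(\Y^2,\ZZ^2)$: the paper simply cites \cite[Eqs~(6) \& (12)]{MR1048936}, whereas you derive it from the product formula together with the identity $(p+q)!\,\|u\,\widetilde{\otimes}\,v\|^2 = p!\,q!\sum_{s}\binom{p}{s}\binom{q}{s}\|u\otimes_s v\|^2$; this identity and the resulting chain of inequalities check out (the number of $\sigma\in S_{p+q}$ of ``type $s$'' is indeed $p!\,q!\binom{p}{s}\binom{q}{s}$), so your argument is self-contained where the paper's is not. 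One caveat you share with the paper: the conclusion requires reading ``non-degenerate chaos decomposition'' so that the component lying in the chaos of order $\ge 2$ is genuinely nonzero --- if, say, $p=1$ and $\ZZ=0$ the statement fails --- and you at least flag this assumption explicitly, whereas the paper's proof infers positivity of $\kappa_4(I_p(u))\vee\kappa_4(I_q(v))$ from $\X\neq 0$ alone, which is the slightly weaker (and strictly speaking insufficient) hypothesis.
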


From Theorem~\ref{thm: p,q} (ii), we note that there exist $\X=\Y +\mathsf{Z}$ with $\Y \in \mathbb{H}_3$ and $\mathsf{Z} \in \mathbb{H}_1$, such that $\kappa_4(\X)=0$ but $\X$ is not Gaussian. One might expect that the same construction could provide counter examples to the fourth-moment theorem when the terms in the decomposition are instead in $\mathbb{H}_1$ and $\mathbb{H}_q$ respectively for general odd $q$. However, this is not the case, as seen in Proposition~\ref{prop:modeks_p_modeks} below, where in fact we show that the fourth cumulant is strictly positive for all variables of the type considered in Theorem \ref{thm: p,q} (ii) (and Remark \ref{rem: p,q} (II)). Hence, the example from Theorem~\ref{thm: p,q} (ii) is not easily generalized.

\begin{proposition}\label{prop:modeks_p_modeks}
    Let $\bm{\Sigma}$ be a covariance matrix with  $\bm{\Sigma}_{1,1}=\bm{\Sigma}_{2,2}=1$ and let $a\in \R$. For
    $(\mathsf{U},\mathsf{V})^T\sim \mathcal{N}_2(\bm{0},\bm{\Sigma})$ it holds that $\kappa_4(aH_1(\mathsf{U})+H_5(\mathsf{V}))>0$.
\end{proposition}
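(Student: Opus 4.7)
Since $p=1$ and $q=5$ share the same parity, Theorem~\ref{thm: ikke gaussisk1} cannot be invoked directly, and my plan is to compute $\kappa_4(\X)$ explicitly as a function of $a$ and the correlation $\rho \coloneqq \bm{\Sigma}_{1,2}\in[-1,1]$, and then verify by inspection that this expression is strictly positive for every admissible pair $(a,\rho)$.

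Writing $\X = a\U+H_5(\V)$ (using $H_1(x)=x$), orthogonality of Hermite polynomials of distinct degrees under the joint Gaussian law gives $\E[H_1(\U)H_5(\V)]=0$, so that $\E[\X^2]=a^2+120$. For $\E[\X^4]$ I would expand via the binomial formula and then evaluate the five mixed moments $\E[\U^k H_5(\V)^{4-k}]$, $k=0,\dots,4$, using two ingredients: the Hermite product formula $H_p(x)H_q(x)=\sum_{r=0}^{p\wedge q} r!\binom{p}{r}\binom{q}{r} H_{p+q-2r}(x)$, which yields the chaos decompositions of $H_5(\V)^2$ and $H_5(\V)^3$, together with the identity $\E[H_i(\U)H_j(\V)]=\delta_{ij}\, i!\,\rho^i$.

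Two structural observations simplify the work. First, since $\U^3=H_3(\U)+3H_1(\U)$ has no $H_5$-component, $\E[\U^3 H_5(\V)]=0$. Second, because $\U\in\mathbb{H}_1$, only the $H_1(\V)$-component of $H_5(\V)^3$ contributes to $\E[\U\, H_5(\V)^3]$, and its coefficient is obtained by summing over the few pairs $(j,r)$ satisfying $5+j-2r=1$ in the products $H_5\cdot H_j$, as $H_j$ runs through the chaos decomposition of $H_5^2$. Carrying this bookkeeping out should produce
\begin{align*}
\E[\U^2 H_5(\V)^2] &= 120+1200\rho^2, \\
\E[\U\, H_5(\V)^3] &= 216000\,\rho, \\
\E[H_5(\V)^4] &= 67003200.
\end{align*}

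Assembling these together with $\E[\U^4]=3$ through the binomial expansion of $\X^4$, and subtracting $3\E[\X^2]^2=3a^4+720a^2+43200$, the $a^4$ and pure $a^2$ contributions will cancel and leave
\begin{equation*}
\kappa_4(\X)=7200\,\rho^2 a^2+864000\,\rho a+66960000.
\end{equation*}
This depends on $(a,\rho)$ only through $s\coloneqq \rho a$, and completing the square yields $\kappa_4(\X)=7200(s+60)^2+41040000\ge 41040000>0$, which is the claim. The main obstacle is essentially arithmetic, namely producing the three mixed moments above without slip; once they are in hand the positivity is immediate and in fact uniform in $(a,\rho)$, bounded away from zero. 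A useful sanity check along the way is that setting $a=0$ must recover $\kappa_4(H_5(\V))=5!\cdot\sum_{r=1}^{4}r!\binom{5}{r}^4 (5-r)!$, which equals the constant term $66960000$ above.
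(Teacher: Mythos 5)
Your strategy coincides with the paper's: both proofs reduce the claim to an explicit computation of $\kappa_4(\X)$ as a polynomial in $a$ and $\rho=\bm{\Sigma}_{1,2}$, followed by an elementary positivity check. The execution differs in two ways worth recording. First, you organize the mixed moments through the Hermite product formula and the identity $\E[H_i(\U)H_j(\V)]=\delta_{ij}\,i!\,\rho^i$, whereas the paper expands $\X^4$ into raw monomials $\U^n\V^m$ and evaluates these via conditional Gaussian moments; your route is shorter and less error-prone. Indeed, your intermediate values $\E[\U^2H_5(\V)^2]=120+1200\rho^2$, $\E[\U H_5(\V)^3]=216000\rho$ and $\E[H_5(\V)^4]=67003200$ all check out, and your final expression $\kappa_4(\X)=7200a^2\rho^2+864000a\rho+66960000$ is the correct one: the paper's displayed polynomial $97920a^2\rho^2+864000a\rho+11340a^2+66960000$ contains arithmetic slips in the $a^2$- and $a^2\rho^2$-coefficients (the $\U^2$-group contributes $(720+7200\rho^2)a^2$ to $\E[\X^4]$, and the $720a^2$ cancels against $3\E[\X^2]^2$), though both polynomials are everywhere positive, so the proposition is unaffected. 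Second, your observation that $\kappa_4(\X)$ depends on $(a,\rho)$ only through $s=a\rho$, together with the completed square $7200(s+60)^2+41040000$, gives a cleaner and quantitative conclusion than the paper's negative-discriminant-plus-continuity argument. One small correction: your sanity-check formula $5!\sum_{r=1}^{4}r!\binom{5}{r}^4(5-r)!$ evaluates to $32400000$, not $66960000$; the correct closed form is $\tfrac{3}{5}\sum_{r=1}^{4}r\,(r!)^2\binom{5}{r}^4(10-2r)!=66960000$, which does match your constant term.
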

Comparing Proposition~\ref{prop:modeks_p_modeks} with Remark \ref{rem: p,q} (II), we see that it is impossible to find an example of a non-Gaussian variable with vanishing fourth cumulant by using the same strategy as in Theorem~\ref{thm: p,q}(ii) when $\ZZ \in \mathbb{H}_1$ and $\Y \in \mathbb{H}_q$ for general odd $q$. 

Our next result shows that by imposing  stronger conditions, it is possible to get a fourth-moment theorem for variables with infinite chaos expansions. Specifically, we require independence of all terms in the expansions and an additional integrability assumption in terms of the \textit{Ornstein-Uhlenbeck operator} $L$, which will be defined in more detail in Section~\ref{sec: prelims}.
\begin{theorem}\label{thm: sum uafh.}
        For each $p\in \N$, consider a sequence $(\mathsf{F}_{p,n})_{n\in \N}$ from $\mathbb{H}_p$ such that for all $n\in\N$, the following holds:
        \begin{enumerate}
            \item The variables $\mathsf{F}_{1,n},\mathsf{F}_{2,n},\dots$, are independent.
            \item The sum $\sum_{p=1}^\infty \mathsf{F}_{p,n}$ converges in $\L^{\!4}(\Omega)$ to a limit $\X_n$ with $\E[\hspace{0.3mm}\mathsf{X}_n^2]=\sigma^2$. 
            \item The variables $\mathsf{X}_n$ are in $\emph{\dom}(L)$ and $\sup_{n\in \N} \|L \mathsf{X}_n\|_{\mathsf{L}^{\!2}(\Omega)}^2 < \infty$.
        \end{enumerate} 
        If $\limn \E[\hspace{0.3mm}\mathsf{X}_n^4]=3\sigma^4$, then $\mathsf{X}_n\xrightarrow[]{\mathcal{D}} \mathcal{N}(0,\sigma^2)$ as $n\to \infty$.
\end{theorem}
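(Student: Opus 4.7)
The plan is to exploit the independence assumption (1) to reduce the problem to the Nualart--Peccati fourth-moment theorem applied within each fixed chaos $\mathbb{H}_p$. Writing $\sigma_{p,n}^2 := \E[\mathsf{F}_{p,n}^2]$, orthogonality of the Wiener chaoses together with the $\L^{\!2}$-convergence from assumption~(2) yields $\sum_{p=1}^\infty \sigma_{p,n}^2 = \sigma^2$, and since $L \mathsf{F}_{p,n} = -p\,\mathsf{F}_{p,n}$, assumption~(3) delivers the uniform tail estimate
\begin{equation*}
\sup_{n\in\N}\sum_{p>N}\sigma_{p,n}^2 \;\le\; \frac{1}{N^2}\,\sup_{n\in\N}\|L\mathsf{X}_n\|_{\L^{\!2}(\Omega)}^2 \;\xrightarrow[N\to\infty]{}\;0,
\end{equation*}
so that $\mathsf{X}_n$ is approximated in $\L^{\!2}(\Omega)$ by its truncation $\mathsf{X}_n^{(N)} := \sum_{p=1}^N \mathsf{F}_{p,n}$ uniformly in $n$.

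Next, additivity of the fourth cumulant for independent centered variables, combined with the $\L^{\!4}$-convergence in assumption~(2) (which ensures continuity of the second and fourth moments), gives
\begin{equation*}
\kappa_4(\mathsf{X}_n) \;=\; \sum_{p=1}^\infty \kappa_4(\mathsf{F}_{p,n}).
\end{equation*}
Each summand is non-negative: $\kappa_4(\mathsf{F}_{1,n})=0$ as $\mathsf{F}_{1,n}$ is Gaussian, and for $p \ge 2$ the Nualart--Peccati identity forces $\kappa_4(\mathsf{F}_{p,n}) \ge 0$. Hence the hypothesis $\kappa_4(\mathsf{X}_n) \to 0$ propagates to $\kappa_4(\mathsf{F}_{p,n}) \to 0$ for every $p \in \N$.

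I conclude by a subsequence argument. Given any subsequence, since $\sigma_{p,n}^2 \le \sigma^2$ for all $p,n$, a diagonal extraction produces a further subsequence along which $\sigma_{p,n}^2 \to \sigma_p^2 \in [0,\sigma^2]$ for every $p$. The Nualart--Peccati fourth-moment theorem applied in the fixed chaos $\mathbb{H}_p$ then yields $\mathsf{F}_{p,n} \xrightarrow{\mathcal{D}} \mathcal{N}(0,\sigma_p^2)$ along this subsequence (trivially for $p=1$, and in the degenerate case $\sigma_p^2=0$ by $\L^{\!2}$-convergence to zero). Independence of $\mathsf{F}_{1,n}, \mathsf{F}_{2,n}, \dots$ promotes this marginal convergence to joint convergence of the truncation $\mathsf{X}_n^{(N)} \xrightarrow{\mathcal{D}} \mathcal{N}(0, \sum_{p\le N}\sigma_p^2)$ for each fixed $N$, via the factorisation of characteristic functions. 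A standard three-limit approximation argument, using the uniform tail bound from the first paragraph, then upgrades this to $\mathsf{X}_n \xrightarrow{\mathcal{D}} \mathcal{N}(0, \sum_p \sigma_p^2)$ along this subsubsequence. Finally, the tail bound combined with Fatou's lemma forces $\sum_p \sigma_p^2 = \sigma^2$, so every subsequential limit coincides with $\mathcal{N}(0,\sigma^2)$ and the whole sequence converges. The main technical subtlety is the interchange of the limits in the truncation index $N$ and the sequence index $n$, which is precisely tamed by the integrability hypothesis~(3).
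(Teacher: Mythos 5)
Your proposal is correct and follows essentially the same route as the paper's proof: additivity of the fourth cumulant under independence forces $\kappa_4(\mathsf{F}_{p,n})\to 0$ in each chaos, the Nualart--Peccati theorem gives marginal Gaussian limits along a diagonal subsequence, independence upgrades this to the truncated sums, and the integrability hypothesis on $L\mathsf{X}_n$ supplies the uniform tail bound needed to interchange the limits in $N$ and $n$ via a three-term characteristic-function estimate. The only (harmless) cosmetic differences are that you phrase the tail bound as $N^{-2}\sup_n\|L\mathsf{X}_n\|^2$ rather than summing $C/p^2$, and you are slightly more explicit than the paper about the degenerate case $\sigma_p^2=0$ and about why $\sum_p\sigma_p^2=\sigma^2$.
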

 As a corollary to Theorem~\ref{thm: sum uafh.}, we obtain a fourth-moment theorem for variables with finite chaos decompositions consisting of independent terms, without requiring any additional regularity. 

\begin{corollary}\label{cor: cor}
    Let $M\in \N$ and consider for each $p\in \{1,\dots, M\}$ a sequence $(\mathsf{F}_{p,n})_{n\in \N}$ from $\mathbb{H}_p$ such that for all $n \in \N$, the variables $\mathsf{F}_{1,n},\dots, \mathsf{F}_{M,n}$ are independent. Let $\mathsf{X}_n = \sum_{p=1}^M \mathsf{F}_{p,n}$ and assume $\E[\hspace{0.3mm}\mathsf{X}_n^2]=\sigma^2$ for all $n\in \N$. Then $\limn \E[\hspace{0.3mm}\mathsf{X}_n^4]=3\sigma^4$ if and only if $\mathsf{X}_n\xrightarrow[]{\mathcal{D}} \mathcal{N}(0,\sigma^2)$ as $n\to \infty$.
\end{corollary}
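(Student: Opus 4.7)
The plan is to derive the ``if'' direction directly from Theorem~\ref{thm: sum uafh.} and to handle the ``only if'' direction via a uniform integrability argument based on hypercontractivity. The key observation is that for a finite chaos decomposition, conditions (2) and (3) of Theorem~\ref{thm: sum uafh.} are automatic.

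For the forward implication, I would set $\mathsf{F}_{p,n}=0$ for $p>M$ so that $\X_n = \sum_{p=1}^\infty \mathsf{F}_{p,n}$ is a formal infinite sum, and verify the three hypotheses of Theorem~\ref{thm: sum uafh.}. Independence of all terms is given by assumption. For condition (2), each $\mathsf{F}_{p,n}$ lies in a fixed Wiener chaos and therefore has moments of all orders, so $\L^{\!4}(\Omega)$-convergence of the (now finite) sum is automatic. For condition (3), $L$ acts as multiplication by $-p$ on $\mathbb{H}_p$, so $\X_n \in \dom(L)$ with $L\X_n = -\sum_{p=1}^M p\mathsf{F}_{p,n}$, and orthogonality of distinct Wiener chaoses together with $\E[\X_n^2]=\sigma^2$ gives
\[
\|L\X_n\|_{\L^{\!2}(\Omega)}^2 = \sum_{p=1}^M p^2 \|\mathsf{F}_{p,n}\|_{\L^{\!2}(\Omega)}^2 \le M^2 \sum_{p=1}^M \|\mathsf{F}_{p,n}\|_{\L^{\!2}(\Omega)}^2 = M^2 \sigma^2,
\]
so the supremum in (3) is finite. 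Theorem~\ref{thm: sum uafh.} then yields the desired distributional convergence.

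For the converse, since convergence in distribution does not in general imply fourth-moment convergence, I would invoke hypercontractivity of the Ornstein--Uhlenbeck semigroup on fixed Wiener chaoses: for $r \ge 2$ and $\mathsf{F} \in \mathbb{H}_p$, one has $\|\mathsf{F}\|_{\L^{\!r}(\Omega)} \le (r-1)^{p/2} \|\mathsf{F}\|_{\L^{\!2}(\Omega)}$. Combined with Minkowski's inequality and $\|\mathsf{F}_{p,n}\|_{\L^{\!2}(\Omega)} \le \sigma$, this yields
\[
\sup_{n \in \N} \|\X_n\|_{\L^{\!r}(\Omega)} \le \sum_{p=1}^M (r-1)^{p/2}\sigma < \infty
\]
for any $r < \infty$. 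Taking, say, $r = 5$ gives uniform integrability of $(\X_n^4)_{n \in \N}$, so distributional convergence to $\NN \sim \mathcal{N}(0,\sigma^2)$ upgrades to fourth-moment convergence $\E[\X_n^4] \to \E[\NN^4] = 3\sigma^4$.

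There is no real obstacle here: the argument is essentially bookkeeping, with hypercontractivity providing the only nontrivial ingredient. The simplification over the infinite-sum setting of Theorem~\ref{thm: sum uafh.} is that finiteness of the chaos decomposition delivers both the $\L^{\!4}$-integrability in (2) and the uniform bound on $\|L\X_n\|_{\L^{\!2}(\Omega)}$ in (3) for free, with no additional regularity hypothesis needed on the $\X_n$.
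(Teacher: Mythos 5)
Your proposal is correct and follows essentially the same route as the paper: the forward direction verifies the hypotheses of Theorem~\ref{thm: sum uafh.} (with the uniform bound on $\|L\X_n\|_{\L^{\!2}(\Omega)}$ coming for free from the finiteness of the decomposition), and the converse uses hypercontractivity on each fixed chaos plus Minkowski to get a uniform higher-moment bound, hence uniform integrability of $(\X_n^4)_{n\in\N}$. The only cosmetic differences are your explicit constant $(r-1)^{p/2}$ and choice $r=5$ versus the paper's unspecified constants and exponent $6$.
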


In summary, while determining when fourth moment theorems hold for random variables with arbitrary chaos expansions remains an open problem, we provide here the first steps towards a deeper understanding of this problem. In the case of two multiple integrals with orders $p,q\in \N$, we show that a fourth moment theorem holds in ``half'' of the cases, i.e.\ when $p$ and $q$ have different parities. Conversely, we provide a counterexample for $p=1$, $q=3$ that shows that the fourth moment theorem does not generalize to $p$ and $q$ both odd. However, Proposition~\ref{prop:modeks_p_modeks} shows that our approach for finding this counterexample fails even for $p=1$, $q=5$. Hence, it remains an open question whether the fourth moment theorem fails for all $p$ and $q$ with the same parity, or if there are other combinatorial assumptions one can impose on the orders. Finally, in a fixed Wiener chaos, it is shown in ~\cite{Optimal} that the optimal rate is $\max\{\left|\E[\mathsf{F}^3]\right|, \kappa_4(\mathsf{F})\}$. However, their proof does not generalize to our setting due to edge cases arising from the presence of different order multiple integrals. In particular, handling contractions like $(u \contr{r_1} v)\contr{r_2} u$ turns out to be difficult when $r_1=p<q$, a situation which cannot occur in the fixed Wiener chaos setting. 

We now briefly describe the main ideas in our proofs. The main technical lemma which we will use repeatedly in the the proofs of Theorems~\ref{thm: p,q} and~\ref{thm: ikke gaussisk1} shows that the fourth cumulant is particularly simple when $p$ and $q$ are of different parities - it decomposes into the sum of the fourth cumulants of each term and then a covariance term between squares of multiple integrals, which turns out to be easy to handle. 

\begin{lemma}\label{lem: key lemma}
    Let $\X$ be a random variable with chaos decomposition $\X=\Y+\mathsf{Z}$, where $\Y \in \mathbb{H}_p$ and $\mathsf{Z}\in \mathbb{H}_q$ for $p,q \in \N$ with different parities. Then
    \[
    \kappa_4(\X)=\kappa_4(\Y)+\kappa_4(\mathsf{Z}) + 6 \cov(\Y^2,\mathsf{Z}^2), \quad \text{ and } \quad \kappa_4(\Y), \kappa_4(\ZZ) \le \kappa_4(\X).
    \]
\end{lemma}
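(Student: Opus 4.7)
The strategy is to expand $\kappa_4(\X)$ directly in terms of $\Y$ and $\ZZ$ via the binomial formula, exploit the parity assumption together with orthogonality of Wiener chaoses to eliminate the ``odd mixed'' cross expectations, and finally reduce the remaining inequalities $\kappa_4(\Y), \kappa_4(\ZZ) \le \kappa_4(\X)$ to the non-negativity of a covariance of squares.

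First I would verify that $\E[\Y\ZZ] = \E[\Y^3 \ZZ] = \E[\Y\ZZ^3] = 0$. For any $k \in \N$, iterated applications of the product formula for multiple Wiener integrals place $\Y^k$ inside $\bigoplus_{j \equiv kp \pmod 2}\mathbb{H}_j$; in particular $\Y$ and $\Y^3$ live in chaoses of parity $p$, while $\ZZ$ and $\ZZ^3$ live in chaoses of parity $q \not\equiv p \pmod 2$, so each of the three cross expectations vanishes by orthogonality of distinct chaoses. Substituting into the binomial expansions of $\E[\X^4]$ and $\E[\X^2]$, and collecting terms in $\kappa_4(\X) = \E[\X^4] - 3\E[\X^2]^2$, then yields the claimed identity $\kappa_4(\X) = \kappa_4(\Y) + \kappa_4(\ZZ) + 6\cov(\Y^2, \ZZ^2)$.

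For the inequalities $\kappa_4(\Y), \kappa_4(\ZZ) \le \kappa_4(\X)$, I would combine the classical single-chaos positivity $\kappa_4(\Y), \kappa_4(\ZZ) \ge 0$ (which is trivial when the chaos order equals $1$) with the claim $\cov(\Y^2, \ZZ^2) \ge 0$. Writing $\Y = I_p(u)$, $\ZZ = I_q(v)$ and applying the product formula, orthogonality of distinct chaoses gives $\E[\Y^2 \ZZ^2] = \E[(\Y\ZZ)^2] = \sum_{r=0}^{\min(p,q)} (r!\binom{p}{r}\binom{q}{r})^2 (p+q-2r)!\,\|u\widetilde\otimes_r v\|^2$, whereas $\E[\Y^2]\E[\ZZ^2] = p!\,q!\,\|u\|^2\|v\|^2$. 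The $r \ge 1$ summands are manifestly non-negative, so the covariance positivity reduces to the key inequality $(p+q)!\,\|u\widetilde\otimes v\|^2 \ge p!\,q!\,\|u\|^2\|v\|^2$.

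To prove this inequality, I would write $u\widetilde\otimes v = \binom{p+q}{p}^{-1}\sum_\pi \pi(u\otimes v)$, where $\pi$ ranges over the $\binom{p+q}{p}$ shuffle representatives of $S_{p+q}/(S_p\times S_q)$, expand $\|u\widetilde\otimes v\|^2$ as a double sum over pairs $(\pi, \pi')$, and identify each cross inner product $\langle \pi(u\otimes v), \pi'(u\otimes v)\rangle$ with a norm-square of the form $\|u\otimes_{b}v\|^2_{\H^{\otimes(p+q-2b)}}$ for an integer $b \in \{0,\ldots,\min(p,q)\}$ determined by the overlap pattern of the $u$- and $v$-positions in $\pi$ and $\pi'$. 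This sum-of-squares rewrite is forced by the symmetries of $u$ and $v$ and makes every cross term non-negative. The diagonal contributions alone ($\pi = \pi'$, corresponding to $b = 0$) yield $\binom{p+q}{p}\|u\|^2\|v\|^2$, which after normalization by $\binom{p+q}{p}^{-2}\cdot(p+q)!$ produces exactly $p!\,q!\,\|u\|^2\|v\|^2$. The main obstacle is this last tensor identification: it requires careful combinatorial bookkeeping of how the $u$- and $v$-indices pair across the two shuffles, but is otherwise mechanical.
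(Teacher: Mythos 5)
Your proposal is correct, and for the central identity it follows the same route as the paper: expand $\E[\X^4]$ and $\E[\X^2]^2$ binomially and kill the odd mixed moments $\E[\Y^3\ZZ]$, $\E[\Y\ZZ^3]$, $\E[\Y\ZZ]$ by observing that the product formula places $\Y^k\ZZ^{\ell}$ in chaoses of parity $kp+\ell q$, which is odd under the parity hypothesis, so no constant ($0$-th chaos) term can appear. The one place you genuinely diverge is the non-negativity of $\cov(\Y^2,\ZZ^2)$: the paper simply cites \cite[Eqs~(6) \&~(12)]{MR1048936}, whereas you prove it from scratch by writing $\E[\Y^2\ZZ^2]=\sum_{r=0}^{p\wedge q}\big(r!\binom{p}{r}\binom{q}{r}\big)^2(p+q-2r)!\,\|u\contr{r}v\|^2$, discarding the $r\ge 1$ terms, and establishing the $r=0$ bound $(p+q)!\,\|u\contr{0}v\|^2\ge p!\,q!\,\|u\|^2\|v\|^2$ via the shuffle expansion of the symmetrization, in which every cross term $\langle \pi(u\otimes v),\pi'(u\otimes v)\rangle$ is identified (using the symmetry of $u$ and $v$) with $\|u\otimes_b v\|^2\ge 0$ for the overlap defect $b=|A\setminus A'|$, and the $\binom{p+q}{p}$ diagonal terms alone already give the claimed lower bound. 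This argument is correct and makes the lemma self-contained at the cost of the combinatorial bookkeeping you flag; the paper's citation is shorter but leaves the reader to unpack the reference. Note also that for the final inequalities only $\kappa_4(\Y),\kappa_4(\ZZ)\ge 0$ is needed, which you invoke correctly (trivial for order $1$, \cite[Cor.~5.2.11]{MR2962301} for order $\ge 2$).
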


Theorem~\ref{thm: p,q} is proven by using the fact that $\E[|\langle D\X_n, DL^{-1}\X_n\rangle_\H-\sigma^2|^2]^{1/2}$ upper bounds the total variation and Wasserstein distances up to constants. Using $\X_n=I_p(u_n)+I_q(v_n)$ and bilinearity of the inner product yields four terms, two of which contain just $I_p(u_n)$ and $I_q(v_n)$ respectively, and the other two terms contain both. The non-mixed terms are easily handled, and for the mixed terms, the critical tool is Lemma~\ref{lem: key lemma} and a bound based on the product formula. 

Theorem~\ref{thm: ikke gaussisk1} is proven by arguing that $\X$ has strictly positive fourth cumulant, which again will follow from Lemma~\ref{lem: key lemma}. 
In the proof of Theorem~\ref{thm: sum uafh.}, we utilize the fact that the $m$'th cumulant of a sum is the sum of $m$'th cumulants when the terms are independent. The proof is concluded by showing pointwise convergence of the associated characteristic functions.

The rest of the paper is structured as follows. In Section~\ref{sec: prelims}, we briefly introduce the necessary notation and concepts which we will need. In Section~\ref{sec: proofs}, we prove Theorems~\ref{thm: p,q},~\ref{thm: ikke gaussisk1} \&~\ref{thm: sum uafh.}, as well as Proposition~\ref{prop:modeks_p_modeks}, Corollary~\ref{cor: cor}, and Lemma~\ref{lem: key lemma}.

\section{Preliminaries} \label{sec: prelims} 
We use the following notation throughout: Let $\N=\{1,2,\dots,\}, \, \N_0 = \N \cup \{0\}$. For random variables $(\X_n)_{n\in \N}, \, \X$, we write $\X_n \xrightarrow[]{\mathcal{D}} \X$ as $n\to \infty$ if $\X_n$ converges in distribution to $\X$. Similarly, if $\X\sim \mu$ for some probability measure $\mu$, we write $\X_n \xrightarrow[]{\mathcal{D}} \mu$. For any $p\in \N$, the space $\L^{\!p}(\Omega)$ denotes the set of random variables $\X$ with $\E[|\X|^p]<\infty$. For a random variable $\X \in \L^{\!4}(\Omega)$, recall that $\kappa_4(\X) \coloneqq \E[\X^4] - 3\E[\X^2]^2$ denotes the fourth cumulant. Finally, let $\vee, \wedge$ denote the maximum and minimum operators respectively.

In this section, we introduce the key concepts required for the proofs. For a detailed exposition, see~\cite{MR2962301}.
Fix a probability space $(\Omega,\F,\PP)$, a real, separable Hilbert space $(\H,\inprod_\H)$ and an isonormal Gaussian process $\W=(\W(h))_{h\in \H}$ and assume that $\F$ is the  $\sigma$-algebra generated by $\W$. That is, $\W$ is a stochastic process on $\H$ such that $(\W(h_1),\dots, \W(h_n))$ is a centered multivariate Gaussian vector in $\R^n$ for all $h_1,\dots,h_n \in \H$ and $n\in \N$ and $\W$ is a linear isometry from $\H$ into $\L^{\!2}(\Omega)$. An essential tool in Gaussian analysis is the class of Hermite polynomials. For $p\in \N$, the $p$'th Hermite polynomial $H_p:\R \to \R$ is given by \begin{equation}\label{defn_hermite_poly}
    H_p(x) \coloneqq (-1)^p \exp(x^2/2) \frac{\dd^p}{\dd x^p}\exp(-x^2/2), \, \text{ and } \, H_0(x) \coloneqq 1, \, \text{ for all }x\in \R.
\end{equation} The Hermite polynomials give rise to an important family of subspaces of $\L^{\!2}(\Omega)$. For each $p\in \N_0$, the closed linear subspace $\mathbb{H}_p = \overline{\text{Span}}\{H_p(\mathsf{W}(h)) : h\in \H, \, \|h\|_\H = 1\}$ is called the $p$'th Wiener chaos. The Wiener chaos spaces provide the following useful orthogonal decomposition $\mathsf{L}^{\!2}(\Omega) = \bigoplus_{p=0}^\infty \mathbb{H}_p$~\cite[Thm~2.2.4]{MR2962301}.  
Alternatively, we can represent Wiener chaoses as the images under certain linear operators. To define these, we let $\Hp$ denote the $p$-fold tensor product of $\H$ with itself and $\Ho\subset \Hp$ the corresponding symmetric tensor product for $p\in \N_0$, where by convention $\H^{\otimes 0} = \H^{\odot 0} = \R$. Then, there is a unique Hilbert space isomorphism $I_p : \Ho \to \mathbb{H}_p$ satisfying $I_p(u^{\otimes p})=H_p(W(u))$ for all unit vectors $u\in \H$, and where $\Ho$ is equipped with the norm $\|\cdot\|_{\Ho} = \|\cdot\|_{\Hp}/\sqrt{p!}$~\cite[Thm~2.7.7]{MR2962301}. We refer to elements of the form $I_p(u)$ as $p$'th order multiple integrals. 
We may now rephrase the orthogonal decomposition of $\L^{\!2}(\Omega)$ stated in~\eqref{eq: decomposition} in terms of multiple integrals: for $\X \in \L^{\!2}(\Omega)$ there are vectors $u_p \in \H^{\odot p}$ such that
\begin{equation}
    \X = \E[\X] +  \sum_{p=1}^{\infty} I_p(u_p). \label{eq: Chaos}
\end{equation}
Equation \eqref{eq: Chaos} then allows us to infer properties of general random variables by understanding multiple integrals.
Another pivotal tool is the following \textit{product formula}, which asserts that a product of multiple integrals is a sum of other multiple integrals ~\cite[Thm 2.7.10]{MR2962301}. Let $p,q\in \N$ and $u\in \Ho, v\in \H^{\odot q}$. Then,
\begin{equation}
    I_p(u)I_q(v) = \sum_{r=0}^{p\wedge q}r! {q \choose r}{p \choose r} I_{p+q-2r}\lp u\,\widetilde{\otimes}_{r}\,v\rp, \label{eq: product formula}
\end{equation}
    where $u \, \widetilde{\otimes}_r \, v$ denotes the symmetrization of the $r$-contraction of $u$ and $v$ (see~\cite[App.~B]{MR2962301}). Define the Ornstein-Uhlenbeck operator $L:\dom(L) \to \L^{\!2}(\Omega)$ by $L\X = \sum_{p=1}^\infty pI_p(u_p)$, with
    \[
    \dom(L) \coloneqq \left\{\X = \E[\X] + \sum_{p=1}^\infty I_p(u_p) \in \L^{\!2}(\Omega)\, \bigg|\, \sum_{p=1}^\infty p^2\E[I_p(u_p)^2] < \infty \right\}
    \]
and its pseudo inverse $L^{-1}:\L^{\!2}(\Omega)\to \L^{\!2}(\Omega)$ by $L^{-1} I_p(u) = - p^{-1}I_{p}(u)$ for any $p\in \N_0, u\in \Ho$ and extends to arbitrary elements by using the chaos decomposition and linearity. 

Finally, we briefly introduce the Malliavin derivative and the divergence operator. Let $d\in \N$ and $\phi:\R^d \to \R$ be infinitely differentiable and such that all derivative are of polynomial growth. Then $\mathsf{F}=\phi(\W(h_1),\dots,\W(h_d))$ is said to be a \textit{smooth} random variable for any $h_1,\dots,h_d\in \H$. 
For such a smooth random variable, we define the Malliavin derivative of $\mathsf{F}$ to be the $\H$-valued random element
\[
D \mathsf{F} = \sum_{i=1}^d \frac{\partial}{\partial x_i}\phi(\W(h_1),\dots,\W(h_d))h_i.
\]
We will make use of the facts that $D$ is a linear operator and that multiple integrals are always smooth random variables with Malliavin derivative $DI_p(u)=pI_{p-1}(u)$ for $p\in \N, u\in \Ho$. The divergence operator $\delta$ is the adjoint of $D$. In particular, it satisfies the identity $\E[\langle D I_p(u), \mathsf{F}h  \rangle_\H] = \E[I_p(u) \delta (\mathsf{F}h)]$ for any $p\in \N, u\in \Ho, \mathsf{F}\in \L^{\!2}(\Omega)$ and $h\in \H$. 

\section{Proofs}\label{sec: proofs}
We are now ready to prove the results stated in Section~\ref{sec: intro}.

\begin{proof}[Proof of Lemma~\ref{lem: key lemma}]
    Due to the correspondence between multiple integrals and Wiener chaoses, there exist vectors $u \in \H^{\odot p}, v \in \H^{\odot q}$  for each $n\in \N$ such that $\X = I_p(u)+I_q(v)$. Hence, need to show that 
        \begin{equation}
            \kappa_4(\X) = \kappa_4(I_p(u))+\kappa_4(I_p(v)) + 6\cov(I_p(u)^2, I_q(v)^2). \label{eq: fjerede kumulant for sum}
        \end{equation}
        By the binomial theorem, it follows that
        \begin{equation}
            \E[\X^4] = \sum_{k=0}^4 {4\choose k} \E[I_p(u)^4I_q(v)^{4-k}]. \label{eq: binomial}
        \end{equation}
        For $k=1,3$, we now show that corresponding mixed moments in~\eqref{eq: binomial} vanish. Indeed, using the product formula~\eqref{eq: product formula} three times yields
        \begin{align}\label{eq: grimt udtryk}
            \begin{aligned}
                        &I_p(u)^3 I_q(v) \\
                        & = \sum_{r_1=0}^{p}\sum_{r_2=0}^{(2p-2r_1) \wedge p}\, \, \sum_{r_3=0}^{(3p-2(r_1+r_2))\wedge q}\remove \, \,\,  c(r_1,r_2,r_3,p,q)I_{3p+q-2(r_1+r_2+r_3)}(h(r_1,r_2,r_3,u,v)) .
        \end{aligned}
        \end{align}
         for some constants $c(r_1,r_2,r_3,p,q)\in \R$ and vectors 
         \[
         h(r_1,r_2,r_3,u,v)\in \H^{\odot(3p-q-2(r_1+r_2+r_3))}.
         \]The constants and vectors can be given explicitly, but as they are unimportant for this proof, we choose not to state them to simplify the expression. Recall now that multiple integrals of order $m\in \N_0$ are centered unless $m=0$. Since $p$ and $q$ have different parities, $3p+q$ is odd, which means there are no choices of $r_1,r_2,r_3\in \N_0$ such that $3p+q-2(r_1+r_2+r_3) = 0$. Therefore, all terms in~\eqref{eq: grimt udtryk} have expectation $0$, implying that $\E[I_p(u_n)^3 I_q(v_n)]=0$ for all $n \in \N$.
         Analogous arguments show that $\E[I_p(u_n) I_q(v_n)^3]=0$.
         Thus,
         \begin{equation}
             \E[\X^4] = E[I_p(u)^4] + E[I_q(v)^{4}] + 6E[I_p(u)^2I_q(v)^{2}].
             \label{eq: moment_for_sum}
         \end{equation}
         Subtracting $3\E[\X^2]^2=3\lp\E[I_p(u)^2]^2+\E[I_q(v)^2]^2 +2\E[I_p(u)^2]\E[I_q(v)^2]\rp$
         on both sides of~\eqref{eq: moment_for_sum} shows~\eqref{eq: fjerede kumulant for sum}. By \cite[Eqs~(6) \&~(12)]{MR1048936}, the term $\cov(I_p(u)^2,I_q(v)^2)$ is non-negative and by \cite[Cor. 5.2.11]{MR2962301}, $\kappa_4(I_q(v))>0$. Combining these facts with \eqref{eq: fjerede kumulant for sum} shows that
         \[
          \kappa_4(\X) = \kappa_4(I_p(u))+\kappa_4(I_q(v)) + 6\cov(I_p(u)^2, I_q(v)^2) \ge \kappa_4(I_p(u)).
         \]
         An identical argument shows the claim for $I_q(v)$.
\end{proof}

    \begin{proof}[Proof of Theorem~\ref{thm: p,q}]
    \textbf{Part~(i)}. We start by showing the inequalities \eqref{eq: ineq1}, \eqref{eq: ineq2} and \eqref{eq: ineq3}.
    Assume that $\NN \sim \mathcal{N}(0,\sigma^2)$. From~\cite[Prop. 5.1.3]{MR2962301}, we have 
    \begin{equation}
        d_{\W}(\X_n,\NN ) \le \frac{1}{\sigma}\E[|\langle D\X_n, DL^{-1}\X_n\rangle_{\H}-\sigma^2|^2]^{1/2},\quad \text{ for all }n \in \N. \label{eq: Wasserstein}
        \end{equation}
    Again, due to ~\cite[Prop. 5.1.3]{MR2962301}, the bound is the same for the total variation distance, except the factor in front is $2/\sigma^2$ instead of $1/\sigma$. Hence, for $d\in \{d_\W, d_{\TV}\}$, we have $$d(\X_n,\NN)\le \frac{2}{\sigma \wedge \sigma^2}\E[|\langle D\X_n, DL^{-1}\X_n\rangle_{\H}-\sigma^2|^2]^{1/2}.$$ 
    To get \eqref{eq: ineq1} in Theorem~\ref{thm: p,q}, we now show that 
    \begin{equation}\label{eq_innner_prod_var}
        \E[|\langle D\X_n, DL^{-1}\X_n\rangle_{\H}-\sigma^2|^2] = \text{Var}[\langle D\X_n, DL^{-1}\X_n\rangle_{\H}].
    \end{equation}
    Indeed, using basic properties of the various operators, we get
    \[
    \E[\langle D\X_n, DL^{-1}\X_n\rangle_{\H}] = -\E[ \X_n (\delta D L^{-1} \X_n)] = \E[\X_n (L L^{-1}\X_n)] = \E[\X_n^2] = \sigma^2,
    \]
    which shows~\eqref{eq_innner_prod_var}. Now, writing out the inner product and using the definition of $L^{-1}$, we obtain
    \begin{align}
    \begin{aligned}\label{eq: inner product}
        &\langle D\X_n, DL^{-1}\X_n\rangle_{\H} - \sigma^2 \\
         &= p^{-1}\langle DI_p(u_n),DI_p(u_n) \rangle_\H 
         - \sigma_{n}^2 + q^{-1}\langle DI_q(v_n),DI_q(v_n) \rangle_\H - \tau_{n}^2\\ &\quad  + \lp p^{-1}+q^{-1}\rp  \langle DI_p(u_n),DI_q(v_n) \rangle_\H, 
    \end{aligned}
    \end{align}
    where $\sigma_n^2 = \E[I_p(u_n)^2], \tau_n^2=\E[I_q(v_n)^2]$ and $\sigma_n^2+\tau_n^2=\sigma^2$ for all $n\in \N$ by orthogonality. Plugging~\eqref{eq: inner product} into~\eqref{eq: Wasserstein}, and using the triangle inequality, we get
    \begin{align}
        \E[|\langle D\X_n, DL^{-1}\X_n\rangle_{\H}-\sigma^2|^2]^{1/2} & \le \E\left[\left|p^{-1}\|DI_p(u_n)\|_\H^2 - \sigma_{n}^2\right|^2\right]^{1/2} \\
        & \quad + \E\left[\left|q^{-1}\|DI_q(v_n)\|_\H^2 - \tau_{n}^2\right|^2\right]^{1/2} \label{eq: vurderet1} \\ 
        & \quad +  \lp p^{-1}+q^{-1}\rp \E[|\langle  DI_p(u_n),DI_q(v_n)\rangle_\H|^2]^{1/2} . \label{eq: vurderet2}
    \end{align}
    As a consequence of~\cite[Lem. 5.2.4]{MR2962301}, there is a constant $R_{p,q}>0$ such that
    \begin{align}
    \begin{aligned}\label{eq: contract 1}
                 & \E\left[\left|p^{-1}\|DI_p(u_n)\|_\H^2 - \sigma_{n}^2\right|^2\right] \le R_{p,q}\max_{r_1 \in \{1,\dots,p-1\}} \|u_n \otimes_{r_1} u_n\|_{\H^{\otimes (2p-2r_1)}},\\
         &\E\left[\left|q^{-1}\|DI_q(v_n)\|_\H^2 - \tau_{n}^2\right|^2\right] \le R_{p,q}\max_{r_2 \in \{1,\dots,q-1\}} \|v_n \otimes_{r_2} v_n\|_{\H^{\otimes (2q-2r_2)}}.
    \end{aligned}
    \end{align}
    Consequently, both expectations of the left-hand sides in \eqref{eq: contract 1} are bounded by
    \[
    R_{p,q}\lp \max_{r_1 \in \{1,\dots,p-1\}} \|u_n \otimes_{r_1} u_n\|_{\H^{\otimes (2p-2r_1)}} + \max_{r_2 \in \{1,\dots,q-1\}} \|v_n \otimes_{r_2} v_n\|_{\H^{\otimes (2q-2r_2)}}\rp.
    \]
   
    What remains is thus to estimate the mixed term. Now, assume without loss of generality that $p<q$. Then, by Lemma~\cite[Lem. 6.2.1]{MR2962301},
    \begin{align}
    \begin{aligned}\label{eq: contract 3}
                 & \E\big[|q^{-1}\langle DI_p(u_n),DI_q(v_n)\rangle_\H|^2\big]  \le p!^2 {q-1\choose p-1}^2(q-p)!\|u_n\|_{\Hp}^2 \|v_n \otimes_{q-p} v_n\|_{\H^{\otimes 2p}}\\
         & \qquad + \frac{p^2}{2}\sum_{r=1}^{p-1}(r-1)!^2{p-1\choose r-1}^2{q-1\choose r-1}^2(p+q-2r)! \\
         & \qquad \times (\|u_n \otimes_{p-r} u_n\|^2_{\H^{\otimes 2r}}+\|v_n \otimes_{p-r} v_n\|^2_{\H_{^{\otimes 2r}}}) \\
         & \le K_{p,q}\lp \max_{r_1 \in \{1,\dots,p-1\}} \|u_n \otimes_{r_1}  u_n\|_{\H^{\otimes (2p-2r_1)}} + \max_{r_2 \in \{1,\dots,q-1\}}\|v_n \otimes_{r_2} v_n\|_{\H^{\otimes (2q-2r_2)}}  \rp \\
         & \qquad \times (1+\|u_n\|^2_{\H^{\otimes p}})
         \\
         & \le K_{p,q}\lp \max_{r_1 \in \{1,\dots,p-1\}} \|u_n \otimes_{r_1}  u_n\|_{\H^{\otimes (2p-2r_1)}} + \max_{r_2 \in \{1,\dots,q-1\}}\|v_n \otimes_{r_2} v_n\|_{\H^{\otimes (2q-2r_2)}}  \rp\\
         & \qquad \times (1+\sigma^2), 
    \end{aligned}
    \end{align}
     for some constant $K_{p,q}>0$, and using in the final inequality that $\|u_n\|^2_{\Hp}\le \sigma^2$ by the assumption that $\E[\X_n^2]=\sigma^2$ for all $n \in \N$ and since $I_p$ is an isometry. Similarly, for all $n\in \N$, it holds that
    \begin{align}\label{eq: contract 4}
        \begin{aligned}
         & \E\big[|p^{-1}\langle DI_p(u_n),DI_q(v_n)\rangle_\H|^2\big]\le \frac{q^2}{p^2} K_{p,q}(1+\sigma^2)  \\ 
         & \quad \times \lp \max_{r_1 \in \{1,\dots,p-1\}} \|u_n \otimes_{r_1}  u_n\|_{\H^{\otimes (2p-2r_1)}} + \max_{r_2 \in \{1,\dots,q-1\}}\|v_n \otimes_{r_2} v_n\|_{\H^{\otimes (2q-2r_2)}}  \rp.  
     \end{aligned}
    \end{align}
     Putting together the inequalities~\eqref{eq: contract 1},~\eqref{eq: contract 3} and~\eqref{eq: contract 4} proves the inequality \eqref{eq: ineq2}. 
    To show the final inequality \eqref{eq: ineq3} we use~\cite[Lem.~5.2.4]{MR2962301} and Lemma~\ref{lem: key lemma} to get
    \begin{align}
         &\bigg( \max_{r_1 \in \{1,\dots,p-1\}} \|u_n \otimes_{r_1}  u_n\|_{\H^{\otimes (2p-2r_1)}} + \max_{r_2 \in \{1,\dots,q-1\}}\|v_n \otimes_{r_2} v_n\|_{\H^{\otimes (2q-2r_2)}}  \bigg)(1+\sigma^2) \nonumber \\
          &\quad \le (1+\sigma^2)(\kappa_4(I_p(u_n))+\kappa_4(I_q(v_n))) \le (1+\sigma^2)\kappa_4(\X_n). \label{eq: ulighed, cumulant}
    \end{align}
    Inserting~\eqref{eq: ulighed, cumulant} into~\eqref{eq: contract 1},~\eqref{eq: contract 3} and~\eqref{eq: contract 4} yields 
    \begin{equation*}
         \E[|\langle D\X_n, DL^{-1}\X_n\rangle_{\H}-\sigma^2|^2]^{1/2}  \le  2\sqrt{R_{p,q}\kappa_4(\X_n)} + (1+q/p)\sqrt{(1+\sigma^2)K_{p,q} \kappa_4(\X_n)}.
    \end{equation*}
    The proof of \eqref{eq: ineq3} is thus completed by choosing\ $$C_{p,q,\sigma}=\frac{2\sqrt{1+\sigma^2}}{\sigma \wedge \sigma^2}\max\left\{\sqrt{R_{p,q}},\sqrt{K_{p,q}}(1+q/p)\right\}.$$ The chain of inequalities \eqref{eq: ineq1}, \eqref{eq: ineq2} and \eqref{eq: ineq3} shows that $(d) \rarrow (c) \rarrow (b) \rarrow (a)$. Hence, we get equivalence of $(a)$-$(d)$ by showing $(a) \rarrow (d)$. Therefore, assume that $\X_n \xrightarrow[]{\mathcal{D}} \mathcal{N}(0,\sigma^2)$. By~\cite[Thm 3.5]{billing2} it suffices to show that $(\X_n^4)_{n\in \N}$ is uniformly integrable, which in turn follows if there exists some $k >4$ such that $\sup_{n\in \N} \E[|\X_n|^k]^{1/k}<\infty$ (by~\cite[p. 218]{Billingsley}). By \textit{hyper contractivity} of multiple integrals (\cite[Thm 2.7.2]{MR2962301}), there are constants $c_p, c_q>0$ such that
    \begin{align}
        &\sup_{n\in \N} \E[I_p(u_n)^6]^{1/6} \le c_p\sup_{n\in \N}\E[I_p(u_n)^2]^{1/2} \le c_p \sigma^2 < \infty, \\
        &\sup_{n\in \N} \E[I_q(v_n)^6]^{1/6} \le c_q\sup_{n\in \N}\E[I_q(v_n)^2]^{1/2} \le c_q \sigma^2 < \infty.
    \end{align}
    Then, Minkowski's inequality~\cite[p. 242]{Billingsley} concludes the proof:
     \[
     \sup_{n\in \N}\E[\X_n^6]^{1/6} \le \sup_{n\in \N} \E[I_p(u_n)^6]^{1/6}+ \sup_{n\in \N} \E[I_q(v_n)^6]^{1/6} < \infty. 
     \]

     \textbf{Part~(ii)}. Let $(\U,\V)^\intercal \sim \mathcal{N}_2\left(\bm{0},\bm{\Sigma}\right)$ be a bivariate normal random vector with mean $\bm{0}$ and covariance matrix $\bm{\Sigma}$, where $\bm{\Sigma}_{1,1}=\bm{\Sigma}_{2,2}=1$ and $\bm{\Sigma}_{1,2}=\bm{\Sigma}_{2,1}=\rho \in [-1,1]$. Define  $\ZZ \coloneqq 10 \U \in \mathbb{H}_1$, $\Y \coloneqq \V^3-3\V \in \mathbb{H}_3$ and $\X \coloneqq \Y + \ZZ = 10\U+\V^3-3\V$. In the following, we show that there exists a $\rho \in [-1,1]$ such that $\kappa_4(\X)=0$, but that $\X$ is not Gaussian for this $\rho$, i.e. that $\E[\X^6] \ne 15 \E[\X^2]^3$. To see that $\kappa_4(\X)=\E[\X^4]-3\E[\X^2]^2=0$, note that
\begin{equation*}
    \E[\X^2]=\E[(10\U)^2+(\V^3-3\V)^2+20\U(\V^3-3\V)]=10^2\E[\U^2]+\E[(\V^3-3\V)^2]=106,
\end{equation*} since $\E[(\V^3-3\V)^2]=
3!$ and $\E[\U(\V^3-3\V)]=0$ by~\cite[Prop.~2.2.1]{MR2962301}. Hence, we have $3\E[\X^2]^2=33708$. Next, to calculate $\E[\X^4]$, we note the following useful property for bivariate normal distributions. Let $f_{(\U,\V)}:\R\times \R \to (0,\infty)$ be the density function of $(\U,\V)^\intercal$, where
\begin{equation*}
    f_{(\U,\V)}(x,y)=\frac{1}{2\pi \sqrt{1-\rho^2}} e^{-\frac{(x^2+y^2-2\rho xy)}{2(1-\rho^2)}}=\frac{1}{\sqrt{2\pi}}e^{-x^2/2} \frac{1}{\sqrt{2\pi (1-\rho^2)}}e^{-\frac{(y-\rho x)^2}{2(1-\rho^2)}},
\end{equation*} for $x,y \in \R$. For the remainder of the proof, denote by $\gamma(\dd x)= (2\pi)^{-1/2}e^{-x^2/2}\dd x$ the standard Gaussian measure on $\R$. Hence, for all $n,m \in \N$, 
\begin{equation}\label{eq:mixed_moments}
    \E[\U^n\V^m]=\int_\R x^n 
    \int_\R y^m \frac{1}{\sqrt{2\pi (1-\rho^2)}}e^{-\frac{(y-\rho x)^2}{2(1-\rho^2)}} \dd y \,\gamma(\dd x)
    = \int_\R x^n \E[M_x^m] \gamma(\dd x),
\end{equation} where $M_x \sim \mathcal{N}(\rho x, 1-\rho^2)$. The moments $\E[M_x^m]$ have the following form:
\begin{gather}
\begin{gathered}\label{eq:M_x_moments}
    \E[M_x]=\rho x, \quad \E[M_x^2]=\rho^2 x^2 + (1-\rho^2), \quad \E[M_x^3]= \rho^3 x^3 +3\rho x(1-\rho^2), \\
    \E[M_x^4]= \rho^4x^4+6\rho^2x^2 (1-\rho^2)+3(1-\rho^2)^2.
\end{gathered}
\end{gather} In the following, we calculate the mixed moments using~\eqref{eq:mixed_moments} and~\eqref{eq:M_x_moments}. Note that $(\U,\V) \overset{\mathcal{D}}{=} (\V,\U)$, hence $\E[\U^n\V^m]=\E[\U^m\V^n]$, and it follows for all $n \in \N$, that
\begin{equation}
\begin{aligned}\label{eq:mixed_moments_exact}
    \E[\U^n \V] &= \int_\R x^{n+1} \rho \gamma(\dd x)=\E[\U^{n+1}] \rho, \\
    \E[\U^n \V^2]&=\int_\R x^n (\rho^2 x^2+(1-\rho^2))\gamma(\dd x)=\E[\U^{n+2}] \rho^2 +\E[\U^n] (1-\rho^2),\\
    \E[\U^n \V^3]&=\int_\R x^n (\rho^3x^3+3\rho x(1-\rho^2))\gamma(\dd x) = \E[\U^{n+3}] \rho^3+\E[\U^{n+1}]3\rho(1-\rho^2), \\
    \E[\U^n \V^4]&=\int_\R x^n (\rho^4x^4+6\rho^2x^2 (1-\rho^2)+3(1-\rho^2)^2)\gamma(\dd x)\\
    &=\E[\U^{n+4}]\rho^4+\E[\U^{n+2}]6\rho^2(1-\rho^2)+\E[\U^n]3(1-\rho^2)^2. 
\end{aligned}
\end{equation} We note that $\E[\U^{2n}]=\E[\V^{2n}]=(2n-1)!!=(2n-1)(2n-3)\cdots 3\cdot 1$, for all $n \in \N$, where $n!!$ is the double factorial. Using~\eqref{eq:mixed_moments_exact}, it holds that
\begin{align*}
    \E[\X^4]&=\E[10000 \U^4 + 4000 \U^3 \V^3 - 12000 \U^3 \V + 600 \U^2 \V^6 - 3600 \U^2 \V^4 \\
    &\qquad + 5400 \U^2 \V^2 + 40 \U \V^9 - 360 \U \V^7 + 1080 \U \V^5\\
    &\qquad  - 1080 \U \V^3 + \V^{12} - 12 \V^{10} + 54 \V^8 - 108 \V^6 + 81 \V^4]\\
    &= 36948 + 12960 \rho + 21600 \rho^2 + 24000 \rho^3.
\end{align*} Hence, since $3\E[\X^2]^2=33708$, it follows that $\kappa_4(\X)=3240 + 12960 \rho + 21600 \rho^2 + 24000 \rho^3$, and
\begin{equation}\label{eq:closed_form_rho}
    \kappa_4(\X)=
    0 \quad \text{ if and only if } \quad \rho= \frac{3}{10} \bigg(\left(\frac{\sqrt{5} - 1}{2}\right)^{1/3}-1 - \left(\frac{2}{\sqrt{5}-1}\right)^{1/3} \bigg).
\end{equation} Above, we used that $\kappa_4(\X)=0$ only has one real root $\rho=-0.39665...$. To show that $\X$ (with $\rho$ as in~\eqref{eq:closed_form_rho}) is not Gaussian, it suffices to show that $\E[\X^6] \ne 15 \E[\X^2]^3=17865240$. Hence, we use~\eqref{eq:mixed_moments_exact} to calculate $\E[\X^6]$:
\begin{align*}
    \E[\X^6]&=\E\big[1000000 \U^6 + 600000 \U^5 \V^3 - 1800000 \U^5 \V + 150000 \U^4 \V^6  \\
    &\qquad - 900000 \U^4 \V^4 + 1350000 \U^4 \V^2 + 20000 \U^3 \V^9 - 180000 \U^3 \V^7   \\
    &\qquad + 540000 \U^3 \V^5 - 540000 \U^3 \V^3 + 1500 \U^2 \V^{12} - 18000 \U^2 \V^{10} + 81000 \U^2 \V^8   \\
    &\qquad - 162000 \U^2 \V^6 + 121500 \U^2 \V^4 + 60 \U \V^{15} - 900 \U \V^{13} + 5400 \U \V^{11}   \\
    &\qquad  - 16200 \U \V^9+ 24300 \U \V^7  - 14580 \U \V^5 + \V^{18} - 18 \V^{16}\\ 
    & \qquad + 135 \V^{14} - 540 \V^{12}  + 1215 \V^{10} - 1458 \V^8 + 729 \V^6\big]\\
    &= 32400000 \rho^4+102960000 \rho^3+104328000 \rho^2+62596800 \rho+34330920 \\
    & = 20292574.8838.... 
\end{align*} This concludes that $\E[\X^6] \ne 15 \E[\X^2]^3$, and hence the proof of part~(ii).
\begin{comment}
    &= 26608920 + 600000 (105 \rho^3+45\rho(1-\rho^2))-1800000 (15\rho)\\
    &+ 150000 (945 \rho^4+630\rho^2(1-\rho^2)+45(1-\rho^2)^2) - 900000 (105 \rho^4+90\rho^2(1-\rho^2)+9(1-\rho^2)^2) \\
    &+ 1350000 (15\rho^2+3(1-\rho^2)) + 20000 (10395\rho^3+945\rho(1-\rho^2))\\
    &- 180000 (945\rho^3+315\rho(1-\rho^2)) + 540000 (105\rho^3+45 \rho(1-\rho^2)) - 540000 (15\rho^3+9\rho(1-\rho^2)) \\
    &+ 1500 (135135\rho^2+10395(1-\rho^2)) - 18000 (10395\rho^2+2835(1-\rho^2)) + 81000 (945\rho^2+105(1-\rho^2)) \\
    &- 162000 (105\rho^2+15(1-\rho^2)) + 121500(15\rho^2+3(1-\rho^2)) + 60 (2027025 \rho ) - 900 (135135\rho )\\
    &+ 5400 (10395\rho ) - 16200 (945 \rho) + 24300 (105 \rho) - 14580(15\rho)\\

     26608920 + 600000 (105 x^3+45x(1-x^2))-1800000 (15x) + 150000 (945 x^4+630x^2(1-x^2)+45(1-x^2)^2) - 900000 (105 x^4+90x^2(1-x^2)+9(1-x^2)^2) + 1350000 (15x^2+3(1-x^2))  + 20000 (10395x^3+2835x(1-x^2)) - 180000 (945x^3+315x(1-x^2)) + 540000 (105x^3+45 x(1-x^2)) - 540000 (15x^3+9x(1-x^2)) + 1500 (135135x^2+10395(1-x^2)) - 18000 (10395x^2+945(1-x^2)) + 81000 (945x^2+105(1-x^2)) - 162000 (105x^2+15(1-x^2)) + 121500(15x^2+3(1-x^2)) + 60 (2027025 x ) - 900 (135135x )  + 5400 (10395x ) - 16200 (945 x) + 24300 (105 x) - 14580(15x)
\end{comment}
    \end{proof}

    \begin{proof}[Proof of Theorem~\ref{thm: ikke gaussisk1}]
        By assumption, there exist vectors $u \in \H^{\odot p}, v \in \H^{\odot q}$ such that $\X = I_p(u)+I_q(v)$. Since $\X\ne 0$, it holds that $\kappa_4(I_p(u))\vee \kappa_4(I_q(v)) > 0$ due to \cite[Cor. 5.2.11]{MR2962301}. Hence, it follows from Lemma \ref{lem: key lemma} that $\kappa_4(\X) > 0$.
    \end{proof} 

    \begin{proof}[Proof of Proposition~\ref{prop:modeks_p_modeks}]
        Let $(\U,\V)^\intercal \sim \mathcal{N}_2\left(\bm{0},\bm{\Sigma}\right)$, where $\bm{\Sigma}_{1,1}=\bm{\Sigma}_{2,2}=1$ and $\bm{\Sigma}_{1,2}=\bm{\Sigma}_{2,1}=\rho \in [-1,1]$. Then $\ZZ \coloneqq a \U \in \mathbb{H}_1$ and $\Y \coloneqq \V^5-10\V^3+15 \V \in \mathbb{H}_5$, and consider $\X \coloneqq \Y + \ZZ = a \U+\V^5-10\V^3+15 \V$. We show for any $a \in \R$ that there does not exist any real $\rho \in [-1,1]$ such that $\kappa_4(\X)=0$. To calculate $\kappa_4(\X)=\E[\X^4]-3\E[\X^2]^2>0$, note that
\begin{align*}
    \E[\X^2]&=\E[(a \U)^2+(\V^5-10\V^3+15 \V)^2+1000 \U(\V^5-10\V^3+15 \V)]\\
    &=a^2\E[\U^2]+\E[(\V^5-10\V^3+15 \V)^2]=a^2+5!,
\end{align*} 
since $\E[(\V^5-10\V^3+15 \V)^2]=
5!$ and $\E[\U(\V^5-10\V^3+15 \V)]=0$ by~\cite[Prop.~2.2.1]{MR2962301}. Hence, we have $3\E[\X^2]^2=3(a^2+5!)^2$. Next, to calculate $\E[\X^4]$, note that
\begin{align*}
    \E[\X^4]&=\E[a^4 \U^4 + 4 a^3 \U^3 \V^5 - 40 a^3 \U^3 \V^3 + 60 a^3 \U^3 \V + 6 a^2 \U^2 \V^{10} - 120 a^2 \U^2 \V^8 \\
    &\qquad + 780 a^2 \U^2 \V^6 - 1800 a^2 \U^2 \V^4 + 1350 a^2 \U^2 \V^2 + 4 a \U \V^{15} - 120 a \U \V^{13}  \\
    &\qquad + 1380 a \U \V^{11} - 7600 a \U \V^9 + 20700 a \U \V^7 - 27000 a \U \V^5 + 13500 a \U \V^3   \\
    &\qquad + \V^{20} - 40 \V^{18} + 660 \V^{16} - 5800 \V^{14} + 29350 \V^{12} \\
    & \qquad - 87000 \V^{10} + 148500 \V^8 - 135000 \V^6 + 50625 \V^4].
\end{align*} Recall that $\E[\U^{2n}]=\E[\V^{2n}]=(2n-1)!!$ for all $n \in \N$. Then it holds that 
\begin{multline*}
        \E[a^4\U^4+\V^{20} - 40 \V^{18} + 660 \V^{16} - 5800 \V^{14} + 29350 \V^{12} - 87000 \V^{10} \\
        + 148500 \V^8 - 135000 \V^6 + 50625 \V^4]=3a^4+67003200.
    \end{multline*}
 Next, using~\eqref{eq:mixed_moments_exact}, it follows that
    \begin{align}
        & \E[4 a^3 \U^3 \V^5 - 40 a^3 \U^3 \V^3]\\
        &\qquad =4a^3 (105\rho^3+45 \rho(1-\rho^2))-40a^3(15\rho^3+9\rho(1-\rho^2)) =-180 a^3 \rho,
    \end{align}
     
     and similarly
    \begin{align*}
        &\E[60 a^3 \U^3 \V + 6 a^2 \U^2 \V^{10} - 120 a^2 \U^2 \V^8 + 780 a^2 \U^2 \V^6 - 1800 a^2 \U^2 \V^4 + 1350 a^2 \U^2 \V^2  \\
        & \qquad + 4 a \U \V^{15} - 120 a \U \V^{13} + 1380 a \U \V^{11} - 7600 a \U \V^9 + 20700 a \U \V^7 ]\\
        &= 180a^3 \rho + 97920 a^2 \rho^2+12060 a^2+864000a \rho.
    \end{align*} Hence, altogether
    \begin{equation*}
        \E[\X^4]= 97920 a^2 \rho^2+864000 a \rho+3a^4+12060a^2+67003200.
    \end{equation*} 
    Since $3\E[\X^2]^2=3(a^2+5!)^2$, we can conclude for all $a \in \R$, that
    \begin{equation}\label{eq:fourth_cumul_adep}
        \kappa_4(\X)=97920a^2 \rho^2+864000a\rho+11340a^2 +66960000.
    \end{equation} The cumulant in~\eqref{eq:fourth_cumul_adep} only has imaginary roots if $a \ne 0$ and no roots whenever $a=0$. The imaginary roots are given by the ensuing expression when $a \ne 0$:
    \begin{equation*}
        \rho = \frac{\pm \sqrt{6} \sqrt{-a^2 (357 a^2 + 2048000)} - 600 a}{136 a^2}.
    \end{equation*} Hence, we may conclude from~\eqref{eq:fourth_cumul_adep} that there for any $a \in \R$ does not exist any $\rho \in [-1,1]$ such that $\kappa_4(\X)=0$. Thus, since $\kappa_4(\X)>0$ obviously holds for $a>0,\rho\in (0,1]$, it holds that $\kappa_4(\X)>0$ for for all $a\in \R$ and $\rho\in [-1,1]$, since $(a,\rho)\mapsto \kappa_4(\X)$ is a continuous function.
    \end{proof}

    \begin{proof}[Proof of Theorem~\ref{thm: sum uafh.}]
        By rescaling, we may assume without loss of generality that $\sigma^2 = 1$. Since the sequence of fourth moments $(\E[\X_n^4])_{n\in \N}$ is bounded, the sequence $(\X_n)_{n\in \N}$ is tight. By Prokhorov's theorem~\cite[Thm~3.1]
        {billing2}, it suffices to show that if $(\X_{n_k})_{k\in \N}$ is any subsequence which has a limit $\mu$ in distribution, then necessarily $\mu = \mathcal{N}(0,1)$.
        To ease notation, we may assume $\mathsf{X}_n \xrightarrow[]{\mathcal{D}} \mu$ as $n\to \infty$ for some probability measure $\mu$. By assumption, the sequence $(\E[\mathsf{F}_{p,n}^2])_{n\in \N}$ is in the compact set $[0,1]$ for all $p \in \N$. Hence, by Bolzano-Weierstrass~\cite[Thm 3.6]{Rudin}, for all $p \in \N$ there is a $\sigma_p^2$ and a subsequence $(n_{p,k})_{k\in \N}$ such that $\E[\mathsf{F}_{p,n_{p,k}}^2] \to \sigma_p^2$ as $k\to \infty$ and $\sum_{p=1}^\infty \sigma_p^2 = 1$. By defining the \textit{diagonal} subsequence $(n_k)_{k\in \N} \coloneqq (n_{k,k})_{k\in \N}$, we have a sequence which is a subsequence of $(n_{p,k})_{k\in \N}$ for all $p\in \N$.
        Along this subsequence, all the variances converge, i.e., $\E\left[\mathsf{F}_{p,n_{k}}^2\right] \to \sigma_p^2$ for all $p \in \N$ as $k \to \infty$. 
        Using that cumulants are additive when the variables are independent (see e.g.\ ~\cite[p. 148]{Billingsley}) and that sum $\sum_{p=1}^\infty \mathsf{F}_{p,n_k}$ converges in $\mathsf{L}^{\!4}(\Omega)$ by assumption, we have
        \begin{align}
        \begin{aligned}\label{eq: kumulanter}
            0 & = \lim_{k\to \infty}\kappa_4(\hspace{0.3mm}\mathsf{X}_{n_k}) = \lim_{k\to \infty}\lim_{M\to \infty} \kappa_4\Bigg(\sum_{p=1}^M \mathsf{F}_{p,n_k} \Bigg) \\
             &= \lim_{k\to \infty}\lim_{M\to \infty} \sum_{p=1}^M\kappa_4( \mathsf{F}_{p,n_k} ) = \lim_{k\to \infty} \sum_{p=1}^\infty \kappa_4( \mathsf{F}_{p,n_k} ). 
        \end{aligned}
        \end{align}
        By~\cite[Cor.~5.2.11]{MR2962301}, $\kappa_4(\mathsf{F}_{p,n_k})\ge 0$ for all $p,k\in \N$, and hence~\eqref{eq: kumulanter} implies that $\lim_{k\to \infty}\kappa_4\lp \mathsf{F}_{p,n_k}\rp =0$ for all $p \in \N$. Consequently,
        \[
        0 = \lim_{k\to \infty} \kappa_4\lp \mathsf{F}_{p,{n_{k}}}\rp = \lim_{k\to \infty }\lp \E\left[\mathsf{F}_{p,{n_{k}}}^4\right]- 3\E\left[\mathsf{F}_{p,{n_{k}}}^2\right]^2 \rp = 
        \lim_{k\to \infty }\E\left[\mathsf{F}_{p,{n_{k}}}^4\right] - 3\sigma_p^4,
        \]
        or equivalently $\E[\lp \mathsf{F}_{p,{n_{k}}}/\sigma_p\rp^4] \to 3$ as $k\to \infty$. Hence, the fourth-moment theorem applies to the sequences $(\mathsf{F}_{p,n_{k}}/\sigma_p)_{k\in \N}$, yielding for all $p\in \N$ that $\mathsf{F}_{p,{n_{k}}} \xrightarrow[]{\mathcal{D}} \mathcal{N}(0, \sigma_p^2)$ as $k\to \infty$ and by independence $\sum_{p=1}^M \mathsf{F}_{p,n_k} \xrightarrow[]{\mathcal{D}} \mathcal{N}\big( 0,\sum_{p=1}^M \sigma_p^2 \big)$ as $k\to \infty$ for all $M \in \N$. 

        Now let $Z_p$ be independent $ \mathcal{N}(0,\sigma_p^2)$ variables for all $p\in \N$. 
        We conclude the proof by showing pointwise convergence of the associated characteristic functions. Recall that the characteristic function of the standard normal is given as $t\mapsto e^{-t^2/2}$, and let $\epsilon>0$ and $t\in \R$. By the triangle inequality, we have for every $M\in \N$ that
        \begin{align}
            &\left|\E\left[e^{it\mathsf{X}_{n_k}}\right] - e^{-t^2/2} \right| \notag \\
            & \le \left|\E\left[e^{it\mathsf{X}_{n_k}}\right] - \E\left[e^{it \sum_{p=1}^M \mathsf{F}_{p,n_k}}\right] \right| 
            + \left|\E\left[e^{it \sum_{p=1}^M \mathsf{F}_{p,n_k}}\right] - \E\left[e^{it \sum_{p=1}^M Z_p}\right] \right| \label{eq: tre led1} \\
            & \quad + \left| e^{-t^2/2} - \E\left[e^{it \sum_{p=1}^M Z_p}\right] \right|.\label{eq: tre led2}
        \end{align}
        Since $\sum_{p=1}^M Z_p \xrightarrow[]{\mathcal{D}} \mathcal{N}(0,1)$ as $M\to \infty$, we can choose $M'$ large enough such that the term in~\eqref{eq: tre led2} is smaller than $\epsilon/3$ for $M\ge M'$. For the first term in~\eqref{eq: tre led1}, we use that for all $a,b\in \R$,
        \begin{equation}
            \left|e^{ita} - e^{itb} \right| 
            = 
            \left| \int_{ta}^{tb} e^{is} \, \dd s \right| \le \int_{at\wedge bt}^{at\vee bt}\, \dd s =|t||b-a|.
         \end{equation}
         Therefore,
         \begin{align}
            \bigg|\E\left[e^{it\mathsf{X}_{n_k}}\right] &- \E\left[e^{it \sum_{p=1}^M \mathsf{F}_{p,n_k}}\right] \bigg| \le \sup_{k\in \N}\left|\E\left[e^{it\mathsf{X}_{n_k}}\right] - \E\left[e^{it \sum_{p=1}^M \mathsf{F}_{p,n_k}}\right] \notag\right| \notag 
              \\
            &\le \sup_{k\in \N} |t| \E\Bigg[\bigg|\mathsf{X}_{n_k} - \sum_{p=1}^{M}\mathsf{F}_{p,n_k}\bigg|\Bigg] \le \lp\sup_{k\in \N} |t|  \E\Bigg[\bigg|\sum_{p=M+1}^{\infty} \!\mathsf{F}_{p,n_k}\bigg|^2\Bigg]\rp^{1/2} \\
            & \le \lp\sup_{k\in \N} |t|\sum_{p=M+1}^{\infty} \! \E\big[\mathsf{F}_{p,n_k}^2\big]\rp^{1/2}. \label{eq: halesum}
        \end{align}
         Now since $(\hspace{0.3mm}\mathsf{X}_n)_{n\in \N}$ is bounded in $\dom(L)$ we have by definition that 
         \[
         C \coloneqq \sup_{n\in \N} \sum_{p=1}^\infty p^2 \E[\mathsf{F}_{p,n}^2] < \infty. \]
        In particular, for every $p,n\in \N$ we have $\E[\mathsf{F}_{p,n}^2] \le C/p^2$, and thus
        \begin{equation}
            \sup_{n \in \N} \sum_{p=M}^\infty \E[\mathsf{F}_{p,n}^2] \le C \sum_{p=M}^\infty \frac{1}{p^2} \to 0, \quad \text{ as } M\to \infty.  \label{eq: halesummer}
        \end{equation}
        Using~\eqref{eq: halesummer}, there is $M''>0$ such that the upper bound in~\eqref{eq: halesum} is smaller than $\epsilon/3$ for $M\ge M''$. 
        For the fixed number $M^*=M'\vee M''$, we have the convergence $\sum_{p=1}^{M^*} \mathsf{F}_{p,n_k} \xrightarrow[]{\mathcal{D}} \sum_{p=1}^{M^*}Z_p$ as $k\to \infty$. We can then choose a $k'$ (depending on $M^*$) such that the second term in~\eqref{eq: tre led1} is smaller than $\epsilon/3$ for $k\ge k'$, meaning that each term in~\eqref{eq: tre led1} and~\eqref{eq: tre led2} is bounded by $\epsilon/3$ for such $k$. Finally, when $k\ge k'$,
        \[
        \left|\E\left[e^{it\mathsf{X}_{n_k}}\right] - e^{-t^2/2} \right| \le \epsilon,
        \]
        which shows convergence of the characteristic functions for all $t\in \R$. Hence, $\mathsf{X}_{n_k} \xrightarrow[]{\mathcal{D}} \mathcal{N}(0,1)$ as $k \to \infty$, implying that $\mu = \mathcal{N}(0,1)$, which completes the proof.
    \end{proof}

    \begin{proof}[Proof of Corollary~\ref{cor: cor}]
       By rescaling, we may assume without loss of generality that $\sigma^2 = 1$. First, assume that $\E[\X_n^4]\to 3$ as $n\to \infty$. For each $n\in \N$, 
        \[
        \|L\X_n\|_{\L^{\!2}(\Omega)}^2 = \sum_{p=1}^M p^2\|\mathsf{F}_{p,n}\|_{\L^{\!2}(\Omega)}^2 \le \sum_{p=1}^M p^2 <\infty.
        \]
        Therefore, all the assumptions of Theorem~\ref{thm: sum uafh.} are satisfied, and the result follows. 
        Now, assume that $\X_n \xrightarrow[]{\mathcal{D}} \mathcal{N}(0,1)$. By~\cite[Thm 3.5]{billing2} it suffices to show that $(\X_n^4)_{n\in \N}$ is uniformly integrable, which in turn follows if there exists some $k >4$ such that $\sup_{n\in \N} \E[|\X_n|^k]^{1/k}<\infty$ ~\cite[p. 218]{Billingsley}. By \textit{hyper contractivity} of multiple integrals (\cite[Thm 2.7.2]{MR2962301}), there are constants $c_1,c_2,\dots, c_M>0$ such that $\sup_{n\in \N} \E[\mathsf{F}_{p,n}^6]^{1/6} \le c_p\sup_{n\in \N}\E[\mathsf{F}_{p,n}^2]^{1/2} < \infty$ for all $p=1,\dots,M$. Then, by Minkowski's inequality~\cite[p. 242]{Billingsley} $\sup_{n\in \N}\E[\X_n^6]^{1/6} \le \sum_{p=1}^M \sup_{n\in \N} \E[\mathsf{F}_{p,n}^6]^{1/6} < \infty$.
    \end{proof}

\thanks{
\noindent ABO and DKB are supported by AUFF NOVA grant AUFF-E-2022-9-39. CS is supported by the European Union (ERC, TUCLA, 101125203). Views and opinions expressed are, however, those of the author(s) only and do not necessarily reflect those of the European Union or the European Research Council. Neither the European Union nor the granting
authority can be held responsible for them.}

\printbibliography

\end{document}